\DeclareMathAlphabet{\mathcal}{OMS}{cmsy}{m}{n}
\DeclarePairedDelimiter\paren{\lparen}{\rparen}
\newtheorem{theorem}{Theorem}
\newtheorem{lemma}{Lemma}
\newtheorem{proposition}{Proposition}
\newtheorem{remark}{Remark}
\newcommand\Autoref[1]{\@first@ref#1,@}
\def\@throw@dot#1.#2@{#1}
\def\@set@refname#1{
    \edef\@tmp{\getrefbykeydefault{#1}{anchor}{}}%
    \def\@refname{\@nameuse{\expandafter\@throw@dot\@tmp.@autorefname}s}%
}
\def\@first@ref#1,#2{%
  \ifx#2@\autoref{#1}\let\@nextref\@gobble
  \else%
    \@set@refname{#1}
    \@refname~\ref{#1}
    \let\@nextref\@next@ref
  \fi%
  \@nextref#2%
}
\def\@next@ref#1,#2{%
   \ifx#2@ and~\ref{#1}\let\@nextref\@gobble
   \else, \ref{#1}
   \fi%
   \@nextref#2%
}
\newcommand{\calE}{\mathcal E}
\newcommand{\calI}{\mathcal I}
\newcommand{\calM}{\mathcal M}
\newcommand{\calS}{\mathcal S}
\newcommand{\rmd}{\mathrm d}
\newcommand{\rme}{\mathrm e}
\newcommand{\rmi}{\mathrm i}
\newcommand{\bbR}{\mathbb R}
\newcommand{\bbT}{\mathbb T}
\newcommand{\disT}{X_{\mathrm d}}
\newcommand{\bbZ}{\mathbb Z}
\newcommand{\pv}{{\mathrm {P.V.}}}
\newcommand{\Se}{S_{\mathrm{even}}}
\newcommand{\So}{S_{\mathrm{odd}}}
\newcommand{\bmd}{\bm d}
\newcommand{\bmu}{\bm u}
\newcommand{\bmv}{\bm v}
\newcommand{\px}{\partial _x }
\newcommand{\pt}{\partial _t }
\newcommand{\dx}{\delta _x }
\newcommand{\dt}{\delta _t }
\newcommand{\Dx}{\Delta x }
\newcommand{\Dt}{\Delta t }
\newcommand{\dht}{H_{\Dx} }
\newcommand{\dlt}{L_{\Dx} }
\newcommand{\dft}{F_{\Dx}}
\newcommand{\ddo}{D_{\Dx}}
\newcommand{\fracpar}[2]{{\frac{\partial #1}{\partial #2} } }
\newcommand{\fracdel}[2]{{\frac{\delta #1}{\delta #2} } }
\def\cot{\mathop{\operator@font cot}\nolimits}
\def\sgn{\mathop{\operator@font sgn}\nolimits}
\def\diag{\mathop{\operator@font diag}\nolimits}
\begin{document}

\title{Structure-preserving integrators for the Benjamin-type equations}
\author{Kimiaki KINUGASA\thanks{Graduate School of Information Science and Technology, 
The University of Tokyo, 
\href{mailto:kimiaki_kinugasa@ipc.i.u-tokyo.ac.jp}{kimiaki{\_}kinugasa@ipc.i.u-tokyo.ac.jp}}, \
Yuto MIYATAKE\thanks{Graduate School of Engineering, Nagoya University, 
\href{mailto:miyatake@na.cse.nagoya-u.ac.jp}{miyatake@na.cse.nagoya-u.ac.jp}} \ and 
Takayasu MATSUO\thanks{Graduate School of Information Science and Technology, 
The University of Tokyo, 
\href{mailto:matsuo@mist.i.u-tokyo.ac.jp}{matsuo@mist.i.u-tokyo.ac.jp}} }

\maketitle

\begin{abstract}
The numerical integration of the Benjamin and Benjamin--Ono equations are considered.
They are non-local partial differential equations involving the Hilbert transform,
and due to this, so far quite few structure-preserving integrators have been proposed.
In this paper, a new reformulation of the equations is stated,
and new structure-preserving discretizations are proposed based on it.
Numerical experiments confirm the effectiveness of the proposed integrators.
\end{abstract}

\section{Introduction}
\label{intro}
In this paper, we consider structure-preserving numerical integration
of the Benjamin-type equations.
The Benjamin equation is represented as
\begin{align}
u_t + \gamma u_x + \lambda uu_x -\alpha L u_{x} -\beta u_{xxx} = 0,
\label{Benjamin}
\end{align}
where $u=u(t,x)$, $x\in\bbR$, $t\geq 0$, $\alpha, \beta, \gamma, \lambda $ are real parameters,
the subscript $t$ (or $x$, respectively) denotes the differentiation
with respect to the time variable $t$ (or $x$).
The operator $L$ is defined as
$L= H \px$, where $H$ denotes the Hilbert transform
\begin{align}
H u(x) = \frac{1}{\pi}\pv \int_{-\infty}^\infty\frac{u(x-y)}{y}\,\rmd y.
\label{HT}
\end{align}
This equation is often considered on the torus $\bbT$, i.e. under 
the periodic boundary condition of length $l$.
In this case the Hilbert transform is defined by 
\begin{align}\label{htf2}
Hu(x) = \frac{1}{l} \pv \int_0^l \cot \paren*{\frac{\pi}{l}y} u(x-y)\,\rmd y,
\end{align}
or equivalently through its Fourier transform
\begin{align}
F(Hu)(k) = -\rmi \sgn (k) \cdot F(u) (k).
\end{align}
The Benjamin equation can formally be seen as a generalization of 
the KdV equation ($\alpha=\gamma=0$) and
the Benjamin--Ono equation ($\beta=\gamma=0$)~\cite{be67,on75}.

The Benjamin equation was first introduced in~\cite{be92}
as a governing equation which models unidirectional propagation of 
long internal waves of small amplitude at an interface of two incompressible fluids of different density.
Global well-posedness is proved for data in both $L^2(\bbR)$ and $L^2(\bbT)$~\cite{li99}.
Since the KdV and Benjamin--Ono equations possess solitary wave solutions of the form
$u(t,x) = \varphi (x-ct)$,
whether the Benjamin equation also has solitary waves has been a major subject.
This topic was initially raised by Benjamin~\cite{be92,be96},
and extensively studied after that (See, for example, \cite{abr99,ap99,cb98}).
Nevertheless, in contrast to the KdV and Benjamin--Ono equations,
explicit formulae are not known.
Since then, numerical studies have been done to approximate the solitary waves~\cite{abr99,ddm12,ddm15,kb00}.

The Benjamin equation has three invariants~\cite{be92}
\begin{align*}
\calM &= \int_{-\infty}^\infty u\,\rmd x, \\
\calI &= -\int_{-\infty}^\infty \frac12 u^2 \,\rmd x ,\\
\calE &= \int_{-\infty}^\infty \paren*{- \frac{\gamma}{2}u^2 - \frac{\lambda}{6}u^3 + \frac{\alpha}{2}uLu
-\frac{\beta}{2}u_x^2}\,\rmd x.
\end{align*}
They are also constant when the equation is considered on the torus $\bbT$.
Only the third invariant $\calE$ determines a Hamiltonian structure $u_t = \px \delta \calE / \delta u$.

For PDEs with geometric structures,
it is widely accepted that structure-preserving numerical methods often
yield better numerical solutions than general-purpose methods, especially over a long period of time,
and this topic has attracting much attention in the last two decades
(see e.g.~\cite{hlw06} for the temporal discretization
and~\cite{br06,fq10,fm11,lr04} for the spatial discretization).
For the KdV equation, several structure-preserving methods have been proposed;
for example,
invariants-preserving methods~\cite{cgm12,do11,fu99},
and symplectic and multi-symplectic methods~\cite{am04,am05,zq00}.
On the other hand,
for the Benjamin and Benjamin--Ono equations, 
only few structure-preserving methods have been known,
because the study on geometry of the Benjamin-type equations are less developed,
and the non-local operator ($H$ or $L$) needs to be discretized carefully.
We are only aware of a single exception: 
an $\calI$-preserving method for the Benjamin--Ono equation,
which was proposed by Thom{\'e}e--Vasudeva Murthy~\cite{tm98}.

Based on the above observation, in this paper 
we aim at proposing some new structure-preserving integrators for the Benjamin-type equations.
We like to note here that some invariant-preserving discretizations are rather obvious
in the following sense.
First, Thom{\'e}e--Vasudeva Murthy's approach mentioned above
can be readily applied to the Benjamin equation.
Second, it is straightforward to derive an $\calE$-preserving integrator
based on the Hamiltonian structure mentioned above
by utilizing the discrete variational derivative method~\cite{cgm12,fu99,fm11}
(provided an appropriate discretization of the Hilbert transform,
such as the one in Thom\'ee--Vasudeva Murthy, or in the present paper).
Thus in the present paper we like to try a different approach,
which is in some sense on top of the literature of the so-called multi-symplectic method.
To this end, we first have to find
a multi-symplectic formulation of the Benjamin type equations.
If it is found, it should tell us the local behavior of the Benjamin type equations;
unfortunately, however, it seems the Hilbert transform 
(which is a non-local operator) prohibits that, 
at least in a standard manner,
and we have to introduce some new ideas.

In this paper, we do this by extending the concept of the multi-symplecticity so that it can fit into 
the Benjamin-type equations.
Then we discretize the equations
by the Euler box and Preissmann box schemes.
Here arises another difficulty---%
the Preissmann box scheme is generally stabler than the Euler box scheme,
and thus is more preferable;
but as its price it has a disadvantage that it is not uniquely solvable
unless the number of the spatial grid points is odd
(this is caused by an averaging operator in front of the unknown variable).
This is troublesome in the present context, since in the literature,
the discretization of the Hilbert transform has been considered only with even number
of grid points.
In this study, we give a discretization of the Hilbert transform also for
the odd case, and more importantly, give its theoretical justification.

This paper is organized as follows.
In \autoref{sec2},
we briefly review the concept of multi-symplecticity
and some discretization methods.
In \autoref{sec3}, we propose new integrators for the Benjamin-type equations.
We extend the concept of the multi-symplecticity in \autoref{sec31},
discuss the discretization of the Hilbert transform and the operator $L$ in \autoref{sec32},
and derive integrators in \Autoref{sec33,sec34}.
In \autoref{sec4}, some numerical results are provided.
Finally, concluding remarks are given in \autoref{sec5}.

The following notation is used in this paper.
For the sake of numerical computation, 
we impose the periodic boundary condition.
The domain $[0,l]$ is discretized by uniform meshes with the space mesh size $\Dx = l/N$.
Numerical solutions are denoted by $u_n^i \approx u(i\Dt , n\Dx)$ where $\Dt$
is the time mesh size.
When we write only  the subscript or superscript, it means 
the associated semi-discretization.
We use the abbreviation $u_{n+1/2} = (u_n+u_{n+1})/2$
(a similar abbreviation is also used for the time index).
In order to treat the periodic boundary condition,
we restrict our consideration to an infinite long vector $\bmu=\{u_n\}_{n\in\bbZ}$
with the property $u_n = u_{n+N}$,
and denote the space to which  such periodic vectors belong by $\disT$
(in this paper, a bold type always belongs to $\disT$).
We use the standard difference operators that approximate $\px$ and $\pt$:
\begin{alignat}{3}
\dx^+ u_n &= \frac{u_{n+1}-u_n}{\Dx},
&\qquad
\dx^- u_n &= \frac{u_n-u_{n-1}}{\Dx},
&\qquad
\dx u_n &= \frac{u_{n+1}-u_{n-1}}{2\Dx}, \\
\dt^+ u^i &= \frac{u^{i+1}-u^i}{\Dt},
&\qquad
\dt^- u^i &= \frac{u^i-u^{i-1}}{\Dt},
&\qquad
\dt u^i &= \frac{u^{i+1}-u^{i-1}}{2\Dt}.
\end{alignat}

\section{Preliminaries}
\label{sec2}
In this section, we briefly review the concept of multi-symplecticity
and some discretization methods.
For more details on this topic,
we refer, for example, to the early references~\cite{br97,br01,mr03}.

\subsection{Multi-symplectic PDEs}
A partial differential equation $F(u,u_t, u_x, u_{tx},\dots)=0$
is said to be multi-symplectic if 
it can be written as a system of first-order equations
\begin{align}\label{ms}
Mz_t + Kz_x = 	\nabla _z S(z),
\end{align}
with $z\in\bbR^d$ a vector of state variables,
in which the original variable $u$ is included as one of its components.
The constant matrices $M,K\in\bbR^{d\times d}$ are skew-symmetric,
and $S$ is a smooth function depending on $z$.

A key property of the multi-symplecticity is that 
there is a multi-symplectic conservation law
\begin{align}\label{mscl}
\pt \omega + \px \kappa = 0,
\end{align}
where $\omega$ and $\kappa$ are differential two-forms defined by
\begin{align*}
\omega = \frac12\rmd z \wedge M\rmd z, \qquad \kappa= \frac12\rmd z \wedge K \rmd z.
\end{align*}
A multi-symplectic PDE also has local conservation laws
\begin{align}
\pt E(z) + \px F(z) &= 0, \label{lcl1} \\
\pt I(z) + \px G(z) &= 0, \label{lcl2}
\end{align}
where 
\begin{alignat}{2}
E (z)&= S(z) + \frac{1}{2}z_x^\top K z, \qquad F(z) &=  -\frac{1}{2}z_t^\top K z,
\\
G (z) &=  S(z) + \frac{1}{2}z_t^\top M z, \qquad I(z) &=  -\frac{1}{2}z_x^\top M z.
\end{alignat}
Integrating these local conservation laws over the spatial domain,
under appropriate boundary conditions and appropriate assumptions of $F(z)$ and $G(z)$,
leads to the global conservation laws
\begin{align} \label{msgcl}
\frac{\rmd}{\rmd t}\calE = \frac{\rmd}{\rmd t}\int E(z)\,\rmd x=0, \qquad
\frac{\rmd}{\rmd t}\calI = \frac{\rmd}{\rmd t}\int I(z)\,\rmd x=0.
\end{align}
These quantities are called energy and momentum, respectively.

\subsection{Multi-symplectic discretizations}
A numerical scheme is called multi-symplectic,
if it satisfies a discrete version of the multi-symplectic conservation law \eqref{mscl}.
As typical multi-symplectic schemes,
we give two well-known examples: the Euler box scheme and the Preissmann box scheme.

Let us introduce a splitting of two matrices $M$ and $K$,
i.e. $M=M_++M_-$ and $K=K_++K_-$ so that $M_+^\top = -M_-$
and $K_+^\top = -K_-$.
The so called Euler box scheme reads
\begin{align}\label{Eb1}
M_+ \dt^+ z_n^i + M_- \dt^- z_n^i + K_+ \dx^+ z_n^i + K_- \dx^- z_n^i = 
\nabla _z S(z_n^i). 
\end{align}
Although the above splitting is not unique,
if we choose $M_+= \frac{1}{2}M$ and $K_+=\frac{1}{2}K$, the scheme \eqref{Eb1}
is simplified to
\begin{align}\label{Eb2}
M\dt z_n^i + K \dx z_n^i = \nabla _z S(z_n^i).
\end{align}
Hereafter, we consider this special case just for simplicity.
The Euler box scheme \eqref{Eb2} satisfies the
discrete multi-symplectic conservation law
\begin{align}
\dt^+ \omega _n^i  + \dx^+ \kappa_n^i = 0, 
\end{align}
where
\begin{align}
\omega_n^i = \frac12\rmd z_n^{i-1} \wedge M \rmd z_n^i,
\qquad
\kappa_n^i = \frac12\rmd z_{n-1}^i \wedge K \rmd z_n^i.
\end{align}

The Preissmann box scheme reads
\begin{align}\label{Pb}
M\dt^+ z_{n+\frac12}^i + K \dx^+ z_n^{i+\frac12} = \nabla _z S(z_{n+\frac12}^{i+\frac12}).
\end{align}
The Preissmann box scheme \eqref{Pb} satisfies the
discrete multi-symplectic conservation law
\begin{align}
\dt^+ \omega _{n+\frac12}^i  + \dx^+ \kappa_n^{i+\frac12} = 0, 
\end{align}
where
\begin{align}
\omega_{n+\frac12}^i = \frac12\rmd z_{n+\frac12}^i \wedge M \rmd z_{n+\frac12}^i,
\qquad
\kappa_n^{i+\frac12} = \frac12\rmd z_n^{i+\frac12} \wedge K \rmd z_n^{i+\frac12}.
\end{align}
The central idea of the Preissmann box scheme is 
to apply the midpoint rule to the both time and space variables,
but the Preissmann box scheme differs from the standard midpoint rule in that
the domain should be divided with odd grid points, i.e. $N$ is odd, for the Preissmann box scheme
to ensure the uniqueness of numerical solutions.

In general, multi-symplectic integrators do not inherit the local conservation laws \eqref{lcl1}, \eqref{lcl2}
and the global conservation laws \eqref{msgcl},
except for the special cases  $S(z)$ is quadratic~\cite{br01}.
However, it should be noted that backward error analysis shows that
these global invariants are nearly preserved without any drift~\cite{mr03}:
for the Euler box and Preissmann box schemes, errors in energy and momentum are bounded
with the order $O(\Dt^2+\Dx^2)$ independently of time.
We also note that a semi-discrete scheme may inherit one of the global invariants:
for example, if we apply the Preissmann box scheme to only the spatial variable, 
the corresponding semi-discrete scheme $M\pt z_{n+1/2} + K \dx^+z_n = \nabla_z S(z_{n+1/2})$
inherits the local and global energy conservation laws~\cite{mo03}.

\section{Proposed numerical schemes}
\label{sec3}
\subsection{A reformulation of the Benjamin-type equations}
\label{sec31}

Since the multi-symplecticity tells us both local and global properties of the PDEs,
ideally we hope to find a multi-symplectic formulation for the Benjamin-type equations.
This seems, however, rather demanding,
at least to the present authors,
due to the non-local operators ($H$ and $L$).
Note that the presence of non-local operators
does not always deny the existence of a multi-symplectic structure.
Indeed, some non-local PDEs such as the Camassa--Holm and Hunter--Saxton equations possess
multi-symplectic structures~\cite{cmr14,cor08}.
The key to the success there is that the non-local operators appear only as some inverse
of standard differential operators, and can be formally eliminated by multiplying the associated
operators;
for example, the Camassa--Holm equation involves $(1-\px^2)^{-1}$.
On the other hand, since the non-locality of the Benjamin equation is due to the Hilbert transform,
whose inverse is also non-local and thus difficult to treat,
it is unlikely that there exists a multi-symplectic formulation for the Benjamin equation.
 
The above difficulty motivates us to extend the multi-symplectic formulation \eqref{ms}
so that the extended formulation can fit into the Benjamin equation.
In the formulation \eqref{ms}, $\nabla_z$ denotes a standard gradient in the finite dimensional setting.
We change this gradient with the functional derivative in the infinite dimensional setting,
and then consider the formulation
\begin{align}\label{ems}
Mz_t + Kz_x = \fracdel{\calS}{z},
\end{align}
where the right hand side denotes the functional derivative of the functional 
\begin{align}
\calS (z) = \int S(z)\,\rmd x
\end{align}
with respect to $z$. 
This idea is motivated by \cite{br97},
where a similar approach has been already mentioned.

\begin{remark}
More rigorously, in the finite dimensional setting,
for $S:\bbR^d\to\bbR$, the gradient $\nabla _z $ in $\bbR^d$ is 
defined by
\begin{align*}
S^\prime (z,\eta) = (\nabla_z S(z),\eta) \qquad
\text{for all } \eta \in \bbR^d,
\end{align*}
where $S^\prime$ denotes the G\^ateaux derivative
and $(\cdot,\cdot)$ is the inner product in $\bbR^d$.
In the infinite dimensional setting,
for $\calS: (L^2 (\bbT))^d\to\bbR$,
the gradient, i.e. functional derivative, $\delta /\delta z$ in $(L^2 (\bbT))^d$ is defined by 
\begin{align*}
\calS^\prime  (z,\eta) = \paren*{ \fracdel{\calS}{z} (z),\eta} \qquad
\text{for all } \eta \in (L^2 (\bbT))^d,
\end{align*}
where $\calS^\prime$ denotes the G\^ateaux derivative
and $(\cdot,\cdot)$ is the inner product in $(L^2 (\bbT))^d$.
For example, if
\begin{align}
\calS (u) = \int_\bbT \frac{\alpha}{2} uLu\,\rmd x,
\end{align}
the simple calculation
\begin{align}
\calS (u+\delta u) - \calS(u) = \int_\bbT \frac{\alpha}{2}
(uL\delta u + \delta u Lu)\,\rmd x + O ((\delta u)^2)
= \int_\bbT(\alpha Lu) \delta u\,\rmd  x  + O((\delta u)^2)
\end{align}
shows that
\begin{align*}
\fracdel{\calS}{u} = \alpha L u.
\end{align*}
\end{remark}

The Benjamin equation can be written as the formulation \eqref{ems}
with $z=[u,\phi,w,v]^\top$,
\begin{align}\def\arraystretch{1.3}
M = \begin{bmatrix}
0 & \frac12 & 0 & 0 \\
-\frac12 & 0 & 0 & 0 \\
0 & 0 & 0 & 0\\
0 & 0 & 0 & 0
\end{bmatrix}, \qquad
K = \begin{bmatrix}
0 & 0 & 0 & -\beta \\
0 & 0 & 1 & 0 \\
0 & -1 & 0 & 0\\
\beta & 0 & 0 & 0
\end{bmatrix}
\end{align}
and
\begin{align*}
S(z) = -wu - \frac{\gamma}{2}u^2  - \frac{\lambda}{6}u^3 + \frac{\alpha}{2}uLu + \frac{\beta}{2}v^2.
\end{align*}
Because of the symmetry of the operator $L$, the functional derivative is calculated to be
\begin{align*}
\fracdel{\calS}{z} = \begin{bmatrix}
-w - \gamma u - \frac{\lambda}{2}u^2 + \alpha Lu, \  0 ,\  -u ,\ \beta v
\end{bmatrix}^\top,
\end{align*}
and thus the formulation can be written in the
￼￼￼￼￼componentwise fashion
\begin{align}
\frac12 \phi_t -\beta v_x &= -w - \gamma u -\frac{\lambda}{2}u^2 + \alpha Lu,  \\
-\frac12 u_t + w_x &= 0, \\
-\phi_x &= -u , \\
\beta u_x &= \beta v.
\end{align}

Below, we discuss local and global properties of the Benjamin equation based on the formulation \eqref{ems}.

Let us first consider the multi-symplectic conservation law \eqref{mscl}.
The variational equation associated with \eqref{ems} is 
\begin{align}
M \rmd z_t + K \rmd z_x = \rmd \paren*{\fracdel{\calS}{z}}.
\end{align}
Here, the right hand side is calculated to be
\begin{align}\def\arraystretch{1.3}
\rmd \paren*{\fracdel{\calS}{z}} =
\begin{bmatrix}
- \gamma -\lambda u +\alpha L  & 0 & -1 & 0 \\
0 & 0  & 0 & 0 \\
-1 & 0 & 0 & 0 \\
0 & 0 & 0 & \beta
\end{bmatrix} \rmd z.
\end{align}
Since
\begin{align*}
\omega_t 
&= \frac12\rmd z_t \wedge M\rmd z + \frac{1}{2} \rmd z \wedge M \rmd z_t \\
&= \frac12 \paren*{K \rmd z_x - \rmd \paren*{\fracdel{\calS}{z}}} \wedge \rmd z
 -  \frac12 \rmd z \wedge \paren*{K \rmd z_x - \rmd \paren*{\fracdel{\calS}{z}}}  \\
&= -\frac12 \paren*{\rmd z \wedge K \rmd z}_x -\frac12 \alpha L \rmd u\wedge\rmd u + \frac12\rmd u \wedge \alpha L \rmd u \\
&= -\kappa_x + \alpha \rmd u \wedge L \rmd u,
\end{align*}
we have 
\begin{align}\label{bmscl}
\omega _t + \kappa_x = \alpha \rmd u \wedge L \rmd u.
\end{align}
Here $\rmd u \wedge L \rmd u$ does not vanish, and thus the multi-symplectic conservation law \eqref{mscl}
does not hold for the Benjamin equation.
However, integrating \eqref{bmscl} over the spatial domain under the periodic boundary condition,
we obtain the global property
\begin{align} \label{bmsgcl}
\frac{\rmd}{\rmd t} \int_\bbT \omega\,\rmd x = 0.
\end{align}

Next we consider the local conservation laws.
Taking the inner product of \eqref{ems} with $z_t$, we obtain
\begin{align}\label{eqks}
z_t^\top Kz_x = z_t^\top \fracdel{\calS}{z}
\end{align}
because of the skew-symmetry of $M$.
Noticing that
\begin{align}
z_t^\top Kz_x = \px \paren*{\frac12 z_t^\top Kz} - \pt \paren*{\frac12 z_x^\top Kz}
\end{align}
and
\begin{align}
z_t^\top \fracdel{\calS}{z} = \pt S(z) + \frac{\alpha}{2} u_t Lu - \frac{\alpha}{2}uLu_t,
\end{align}
we have
\begin{align} \label{emslcl1}
\pt E(z) + \px F(z) = -\frac{\alpha}{2} u_t Lu + \frac{\alpha}{2} uLu_t.
\end{align}
Therefore, the local conservation law \eqref{lcl1} does not hold.
Similarly, we have
\begin{align} \label{emslcl2}
\pt I(z) + \px G(z) = -\frac{\alpha}{2} u_x Lu + \frac{\alpha}{2} uLu_x,
\end{align}
and thus the local conservation law \eqref{lcl2} is not satisfied.
Although the local conservation laws do not hold,
integrating \eqref{emslcl1} and \eqref{emslcl2} over the spacial domain
under the periodic boundary condition immediately leads to the global conservation laws
\begin{align}
\frac{\rmd}{\rmd t} \calE &= \frac{\rmd}{\rmd t} \int_{\bbT} E(z)\,\rmd x
=\frac{\rmd}{\rmd t} \int_{\bbT} \paren*{- \frac{\gamma}{2}u^2 - \frac{\lambda}{6}u^3 + \frac{\alpha}{2}uLu
-\frac{\beta}{2}u_x^2}\,\rmd x = 0, \label{bgcle} \\
\frac{\rmd}{\rmd t} \calI &= \frac{\rmd}{\rmd t} \int_{\bbT} I(z)\,\rmd x
= \frac{\rmd}{\rmd t} \int_{\bbT} - \frac12 u^2 \,\rmd x = 0. \label{bgcli}
\end{align}
Here, the symmetry of  the operator $L$
\begin{align}
\int_\bbT u L v\,\rmd x = \int_\bbT (Lu)v\,\rmd x
\end{align}
is used.

\begin{remark}
Choosing $\beta=\gamma=0$ leads to the formulation \eqref{ems}
for the Benjamin--Ono equation:
\begin{align*}\def\arraystretch{1.3}
\begin{bmatrix}
0 & \frac12 & 0 \\
-\frac12 & 0 & 0 \\
0 & 0 & 0
\end{bmatrix}
\begin{bmatrix}
u \\ \phi \\ w
\end{bmatrix}_t
+ \begin{bmatrix}
0 & 0 & 0 \\
0 & 0 & 1 \\
0 & -1 & 0
\end{bmatrix}
\begin{bmatrix}
u \\ \phi \\ w
\end{bmatrix}_x
= \fracdel{}{z}
\int \paren*{-wu - \frac{\lambda}{6}u^3 + \frac{\alpha}{2}uLu}\,\rmd x.
\end{align*} 
Similarly, choosing $\alpha=\gamma=0$ leads to the formulation \eqref{ems}
for the KdV equation:
\begin{align}\def\arraystretch{1.3}
\begin{bmatrix}
0 & \frac12 & 0 & 0 \\
-\frac12 & 0 & 0 & 0 \\
0 & 0 & 0 & 0\\
0 & 0 & 0 & 0
\end{bmatrix}
\begin{bmatrix}
u \\ \phi \\ w \\ v
\end{bmatrix}_t +
\begin{bmatrix}
0 & 0 & 0 & -\beta \\
0 & 0 & 1 & 0 \\
0 & -1 & 0 & 0\\
\beta & 0 & 0 & 0
\end{bmatrix}
\begin{bmatrix}
u \\ \phi \\ w \\ v
\end{bmatrix}_x =
\fracdel{}{z}
\int \paren*{-wu - \frac{\lambda}{6}u^3 + \frac{\beta}{2}v^2}\,\rmd x,
\end{align}
but this coincides with the well-known multi-symplectic form~\cite{am04,am05,zq00}
since in this case $\delta \calS /\delta z = \nabla_z S(z)$. 
\end{remark}


\subsection{Discretizations of the operators $H$ and $L$}
\label{sec32}
To solve the Benjamin equation numerically,
it is mandatory to discretize the operators $H$ and $L=H\px$.
In particular, the operator $L$ should be discretized so that the symmetry is kept in the discrete setting.

We first review the approach developed by Thom{\'e}e--Vasudeva Murthy~\cite{tm98}.
However, as will be explained soon, 
their approach only makes sense when the domain is divided into even intervals. 
Therefore, we shall develop a new discretization method
for odd intervals so that the Preissmann box scheme is applicable.

\subsubsection{Thom{\'e}e--Vasudeva Murthy's approach for $H$ (even intervals)}
In~\cite{tm98} a discrete version of the Hilbert transform \eqref{htf2} for $\bmu=\{u_n\}_{n\in\bbZ}\in\disT$ 
is defined by
\begin{align}
(\dht \bmu)_n = \frac{1}{l}
\sum_{j=0}^{N/2-1} \cot \paren*{\frac{\pi	}{l}(2j+1)\Dx}
u_{n-(2j+1)} 2 \Dx.
\end{align}
Here, the midpoint rule is used for each interval $[x_{2j},x_{2j+2}]$ ($j=0,\dots,N-1$).
This definition is rewritten as a discrete convolution
\begin{align}\label{conv1}
(\dht \bmu)_n = \sum_{j=0}^{N-1} c_{n-j}u_j,
\qquad
\text{where} 
\qquad c_n=
\begin{cases}
\frac{2}{N} \cot \paren*{\frac{\pi}{N}n}, & \text{if }n\text{ is odd},
\\ 
0, & \text{if }n\text{ is even}.
\end{cases}
\end{align}
Here we note that $\{c_n\}_{n\in\bbZ}\in\disT$ and $c_n=-c_{-n}$.
Furthermore, we rewrite \eqref{conv1} by using the discrete Fourier transform.
For $\bmu=\{ u_n\}_{n\in\bbZ}\in\disT$, the discrete Fourier transform $\dft \bmu$ is defined by
\begin{align}
(\dft \bmu) _k = \sum_{n=0}^{N-1}u_n \rme^{-2\pi \rmi n k/N},
\end{align}
which also belongs to $\disT$.
For $\bmv =\{ v_k\}_{k\in\bbZ}\in\disT$, the inverse transform $\dft^{-1} \bmv$ is defined by
\begin{align}
(\dft ^{-1}\bmv)_n = \frac{1}{N}\sum_{k=0}^{N-1}v_k \rme^{2\pi \rmi n k/N}.  
\end{align}

\begin{lemma}[\cite{tm98}] \label{tm:lem41}
The discrete Hilbert transform defined in \eqref{conv1}
is expressed as
\begin{align}
\dht \bmu = \dft^{-1} (-\rmi \Se) \dft \bmu,
\end{align}
where $\Se$ is defined by
\begin{align}
\Se = \diag (0,\underbrace{1,\dots,1}_{\frac{N}{2}-1},0,\underbrace{-1,\dots,-1}_{\frac{N}{2}-1}) .
\end{align}
\end{lemma}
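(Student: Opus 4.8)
The plan is to exploit the fact that $\dht$ acts as a discrete convolution, so that passing to the discrete Fourier side turns it into a diagonal (pointwise) multiplier; the whole claim then reduces to identifying that multiplier with the diagonal of $-\rmi\Se$.

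First I would record the discrete convolution theorem. For $\bm c,\bmu\in\disT$ the sequence $(\dht\bmu)_n=\sum_{j=0}^{N-1}c_{n-j}u_j$ from \eqref{conv1} satisfies, componentwise, $(\dft(\dht\bmu))_k=(\dft\bm c)_k\,(\dft\bmu)_k$. This follows in one line by inserting the definition of $\dft$, substituting $m=n-j$, and using the $N$-periodicity of $\{c_n\}$ to factor the resulting double sum. Consequently $\dht\bmu=\dft^{-1}\big((\dft\bm c)\,(\dft\bmu)\big)$, and the lemma is equivalent to the single assertion $(\dft\bm c)_k=(-\rmi\Se)_{kk}$ for every $k$.

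Rather than compute $\dft\bm c$ in the forward direction, I would verify the equivalent statement that $\{c_n\}$ is recovered as the inverse transform of the prescribed diagonal, i.e. that $c_n=\frac1N\sum_{k=0}^{N-1}(-\rmi\Se)_{kk}\,\rme^{2\pi\rmi nk/N}$. The key structural feature is the odd symmetry of that diagonal: the value is $-\rmi$ on $k=1,\dots,N/2-1$ and $+\rmi$ on the mirror indices $k=N/2+1,\dots,N-1$, with zeros at the two self-conjugate modes $k=0$ and $k=N/2$. Pairing each $k$ with $N-k$ therefore collapses the complex exponentials into sines and yields
\[
c_n=\frac{2}{N}\sum_{k=1}^{N/2-1}\sin\paren*{\frac{2\pi nk}{N}}.
\]

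The crux is then the evaluation of this finite sine sum. Using the closed form $\sum_{k=1}^{M}\sin(k\theta)=\sin(M\theta/2)\sin((M+1)\theta/2)/\sin(\theta/2)$ with $\theta=2\pi n/N$ and $M=N/2-1$, the numerator carries a factor $\sin(\pi n/2)$, which vanishes for even $n$ and equals $\pm1$ for odd $n$; in the odd case $\cos(\pi n/2)=0$ lets one simplify the remaining factor to $\cos(\pi n/N)$, so the sum reduces to $\cot(\pi n/N)$. This reproduces exactly the two branches in the definition of $c_n$ and completes the identification. I expect this trigonometric evaluation—in particular the bookkeeping of the parity of $n$ and the correct treatment of the endpoints $k=0$ and $k=N/2$ (where the evenness of $N$ is used)—to be the only genuine obstacle; the convolution step and the conjugate pairing are routine.
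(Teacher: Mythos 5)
Your proposal is correct and follows essentially the same route the paper uses: it does not reprove this lemma (it cites \cite{tm98}) but proves the odd-$N$ analogue by exactly your argument---discrete convolution theorem, conjugate pairing of the exponentials into a finite sine sum, and evaluation of that sum with the parity split on $n$---and states that the proof of this lemma is similar. Your trigonometric evaluation checks out: $\frac{2}{N}\sum_{k=1}^{N/2-1}\sin(2\pi nk/N)$ vanishes for even $n$ and equals $\frac{2}{N}\cot(\pi n/N)$ for odd $n$, matching $c_n$.
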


The following lemma indicates that
$\dht \bmu$
is a second order approximation to $Hu$.

\begin{lemma}[\cite{tm98}] \label{tm:lem42}
Assume that $u$ is periodic and sufficiently smooth.
Then it follows that
\begin{align}
\| \dht \bmu - H u\| _\infty \leq C (\Dx)^2 \| u \| _{C^3},
\end{align}
where 
$Hu$ is an abbreviation of $[Hu(x_0),Hu(x_1),\dots,Hu(x_{N-1})]^\top$,
$\|\bmu\|_\infty = \max_n \|u_n\|$ and
$\|u\|_{C^3} = \max_x |u(x)| + \max_x |u^\prime(x)| + 
\max_x |u^{\prime\prime}(x)| + \max_x |u^{\prime\prime\prime}(x)|$.
\end{lemma}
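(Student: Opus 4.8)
The plan is to read the definition \eqref{conv1} of $\dht$ as the composite midpoint rule for the principal-value integral \eqref{htf2}, and then to bound the resulting quadrature error. Indeed, $(\dht\bmu)_n$ is exactly the midpoint approximation of $\frac{1}{l}\pv\int_0^l\cot(\pi y/l)\,u(x_n-y)\,\rmd y$ obtained by splitting $[0,l]$ into the $N/2$ cells $[x_{2j},x_{2j+2}]$ of width $2\Dx$ and sampling at the midpoints $x_{2j+1}=(2j+1)\Dx$. Since these midpoints are odd multiples of $\Dx$, the rule never evaluates the kernel at its singularity $y=0$ (equivalently $y=l$), which is precisely what makes it well defined. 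The obstacle, which I expect to be the crux, is that the naive per-cell midpoint estimate $\frac{(2\Dx)^3}{24}\|(\cdot)''\|_\infty$ is useless near the singularity: the second derivative of $\cot(\pi y/l)$ grows like $y^{-3}$, so on the cell adjacent to $y=0$ this bound is only $O(1)$. The cure is to regularize the integrand before estimating.

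First I would use that $H$ annihilates constants, i.e.\ $\pv\int_0^l\cot(\pi y/l)\,\rmd y=0$, which follows from the antisymmetry $\cot(\pi(l-y)/l)=-\cot(\pi y/l)$ of the kernel about $y=l/2$. The same antisymmetry, applied to the sampling set $\{(2j+1)\Dx\}_j$, which is invariant under $y\mapsto l-y$ since $N$ is even, yields the discrete counterpart $\sum_{j=0}^{N/2-1}\cot(\pi(2j+1)\Dx/l)=0$ (any fixed point $y=l/2$ of the involution, occurring when $N/2$ is odd, contributes $\cot(\pi/2)=0$ anyway). I may therefore subtract $u(x_n)$ in both the exact and the discrete expression without altering either, obtaining
\begin{align*}
Hu(x_n)=\frac{1}{l}\int_0^l g_n(y)\,\rmd y,\qquad
(\dht\bmu)_n=\frac{2\Dx}{l}\sum_{j=0}^{N/2-1}g_n\bigl((2j+1)\Dx\bigr),
\end{align*}
where $g_n(y)=\cot(\pi y/l)\bigl(u(x_n-y)-u(x_n)\bigr)$. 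Now $g_n$ is a genuine (non-singular) $l$-periodic function, the principal value is no longer needed, and the two right-hand sides differ precisely by the composite midpoint error for $g_n$.

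The heart of the matter is then the bound $\|g_n''\|_\infty\le C\|u\|_{C^3}$. Near $y=0$ I would insert the Laurent expansion $\cot(\pi y/l)=\frac{l}{\pi y}+h(y)$ with $h$ analytic, so that $g_n(y)=\frac{l}{\pi}\psi_n(y)+h(y)\bigl(u(x_n-y)-u(x_n)\bigr)$, where $\psi_n(y)=\bigl(u(x_n-y)-u(x_n)\bigr)/y$. The second term is a product of smooth factors and causes no trouble; for the first I would use the representation $\psi_n(y)=-\int_0^1 u'(x_n-ty)\,\rmd t$, which shows at once that $\psi_n\in C^2$ with $\|\psi_n''\|_\infty\le\frac{1}{3}\|u'''\|_\infty$. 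This is exactly the step that consumes the $C^3$ hypothesis: dividing the difference $u(x_n-y)-u(x_n)$ by $y$ costs one derivative. An identical argument at the second singular point $y=l$, writing $u(x_n-y)-u(x_n)=u(x_n-y)-u(x_n-l)$ by periodicity, establishes $g_n\in C^2(\bbT)$ with the asserted norm control.

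With $g_n\in C^2$ the conclusion is routine. The composite midpoint rule over $N/2$ cells of width $2\Dx$ gives
\begin{align*}
\left|Hu(x_n)-(\dht\bmu)_n\right|
=\frac{1}{l}\left|\int_0^l g_n(y)\,\rmd y-2\Dx\sum_{j=0}^{N/2-1}g_n\bigl((2j+1)\Dx\bigr)\right|
\le\frac{(2\Dx)^2}{24}\|g_n''\|_\infty\le C(\Dx)^2\|u\|_{C^3},
\end{align*}
and taking the maximum over $n$ yields the claim. I expect the delicate part to be solely the $C^2$ regularity of $g_n$ at the two singular points; the quadrature estimate and the summation over cells are then bookkeeping.
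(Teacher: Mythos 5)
Your argument is correct, but it regularizes the singular kernel differently from the paper (which for this even-$N$ lemma defers to \cite{tm98}, and whose own written-out proof of the odd-$N$ analogue follows the same template as \cite{tm98}). The paper's route is to \emph{antisymmetrize}: it rewrites both $Hu(x_n)$ and $(\dht\bmu)_n$ with the combination $u(x_n-y)-u(x_n+y)$, factors the integrand as $\psi\paren*{y/(2l)}\,\phi(x_n,y)$ with $\psi(y)=y\cot(\pi y)$ smooth near $0$ and $\phi(x,y)=\bigl(u(x-y)-u(x+y)\bigr)/y$ the symmetric difference quotient, and then applies the standard midpoint (and, in the odd case, rectangle) error estimates to this product, using $\|\phi(x_n,\cdot)\|_{C^2}\le 2\|u\|_{C^3}$. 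Your route instead subtracts the constant $u(x_n)$, which is legitimate because the kernel has zero mean both in the principal-value sense and against the discrete weights $\{\cot(\pi(2j+1)\Dx/l)\}_j$ (your verification of the discrete zero-sum via the involution $y\mapsto l-y$ is the one genuinely new ingredient your proof needs and the paper's does not); you then restore $C^2$ regularity locally at each singular endpoint via the Laurent expansion of $\cot$ and the representation $\psi_n(y)=-\int_0^1 u'(x_n-ty)\,\rmd t$. Both arguments spend exactly one derivative in the same place --- turning a first-order difference of $u$ divided by $y$ into something whose second $y$-derivative is controlled by $u'''$ --- so the $C^3$ hypothesis enters identically, and your final composite-midpoint bookkeeping (including the factor $l$ cancelling against the prefactor $1/l$) is right. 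What the paper's antisymmetrization buys is that the two endpoint singularities are handled by a single global factorization and the difference quotient is automatically odd, shortening the bookkeeping; what your constant-subtraction buys is that it avoids re-deriving the discrete sum in folded form and transfers verbatim to any quadrature whose weights annihilate constants, at the cost of checking the discrete zero-sum identity and treating $y=0$ and $y=l$ separately. No gap; the proof stands.
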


\subsubsection{New approach for $H$ (odd intervals)}

In above, we defined a discrete version of the operator $H$ for even $N$,
and discussed its accuracy.
For the Preissmann box scheme, however, we need its odd number
counterpart.
Recall that 
the discrete Hilbert transform is written through the discrete Fourier transform in \autoref{tm:lem41}.
We wish to obtain a similar expression for odd $N$.
For this aim, it is convenient to split \eqref{htf2} into two terms:
\begin{align}
Hu(x)
&= \frac{1}{l} \pv \int_0^l \cot \paren*{\frac{\pi}{l}y}u(x-y)\,\rmd y \nonumber \\
&= \frac{1}{2l} \pv \int_0^l \paren*{\cot \paren*{\frac{\pi}{2l}y} - \tan \paren*{\frac{\pi}{2l}y}} 
u(x-y) \,\rmd y \nonumber \\
&= \frac{1}{2l} \lim_{\epsilon\to +0} 
\paren*{ \int_\epsilon^l  \cot \paren*{\frac{\pi}{2l}y} u(x-y)\,\rmd y
- \int_0^{l-\epsilon} \tan\paren*{\frac{\pi}{2l}y} u(x-y) \,\rmd y} .  \label{HT1}
\end{align}
For the first term, we apply the midpoint rule to each interval $[(2j-2)\Dx, 2j\Dx]$ ($j=1,\dots, (N-1)/2$)
and the rule $\int_a^b f(x)\,\rmd x \approx (b-a) f(b)$ to the remaining interval $[(N-1)\Dx,l]$.
For the second term, we apply the midpoint rule to each interval $[(2j-1)\Dx, (2j+1)\Dx]$ ($j=1,\dots, (N-1)/2$)
and the rule $\int_a^b f(x)\,\rmd x \approx (b-a) f(a)$ to the remaining interval $[0,\Dx]$.
We now define a discrete Hilbert transform by
\begin{align}
(\dht \bmu)_n 
&= \frac{1}{2l} \sum_{j=1}^{(N-1)/2} \cot \paren*{\frac{\pi}{2l} (2j-1)\Dx} u_{n-(2j-1)}2\Dx \\
& \phantom{=} + \frac{1}{2l} \cot \paren*{\frac{\pi}{2}}u_{n-N}\Dx \\
& \phantom{=} - \frac{1}{2l} \tan (0) u_n\Dx \\
& \phantom{=} - \frac{1}{2l} 
	\sum_{j=1}^{(N-1)/2} \tan \paren*{\frac{\pi}{2l} 2j\Dx} u_{n-2j}2\Dx \\
&= \frac{1}{2l} \sum_{j=1}^{(N-1)/2} \cot \paren*{\frac{\pi}{2l} (2j-1)\Dx} u_{n-2j+1}2\Dx \\
& \phantom{=} - \frac{1}{2l} 
	\sum_{j=1}^{(N-1)/2} \tan \paren*{\frac{\pi}{2l} 2j\Dx} u_{n-2j}2\Dx. \label{DHT}
\end{align}
This is rewritten as a discrete convolution
\begin{align} \label{conv2}
(\dht \bmu)_n = \sum_{j=0}^{N-1} d_{n-j}u_j, \qquad
\text{where} 
\qquad d_n=
\begin{cases}
\frac{1}{N} \cot \paren*{\frac{\pi n}{2N}}, & \text{if }n \text{ is odd},
\\ 
-\frac{1}{N} \tan \paren*{\frac{\pi n}{2N}}, & \text{if }n \text{ is even},
\end{cases}
\end{align}
and thus it can be expressed through the discrete Fourier transform as desired.

\begin{lemma}
The discrete Hilbert transform defined in \eqref{conv2}
is expressed as
\begin{align}
\dht \bmu = \dft^{-1} (-\rmi \So) \dft \bmu,
\end{align}
where $\So$ is defined by
\begin{align}
\So = \diag (0,\underbrace{1,\dots,1}_{\frac{N-1}{2}},\underbrace{-1,\dots,-1}_{\frac{N-1}{2}}) .
\end{align}
\end{lemma}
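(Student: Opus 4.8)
The backbone of the proof is the discrete convolution theorem. By \eqref{conv2}, $\dht$ acts on $\bmu$ as the circular convolution with the kernel $\bmd=\{d_n\}_{n\in\bbZ}$, so the first thing I would check is that $\bmd\in\disT$, i.e. $d_{n+N}=d_n$. Since $N$ is odd, adding $N$ flips the parity of $n$ and shifts the argument by $\pi/2$; the identities $\cot\paren*{\theta+\frac{\pi}{2}}=-\tan\theta$ and $\tan\paren*{\theta+\frac{\pi}{2}}=-\cot\theta$ then interchange the cotangent and tangent branches of \eqref{conv2}, leaving $d_{n+N}=d_n$. With periodicity in hand, substituting $m=n-j$ and using the $N$-periodicity of $d_m\rme^{-2\pi\rmi mk/N}$ gives $\bigl(\dft(\dht\bmu)\bigr)_k=(\dft\bmd)_k(\dft\bmu)_k$, whence $\dht=\dft^{-1}\diag(\dft\bmd)\dft$. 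It therefore suffices to prove that $(\dft\bmd)_k=-\rmi(\So)_{kk}$ for every $k$, i.e. that the diagonal multiplier produced by $\bmd$ is exactly $-\rmi\So$.

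Rather than summing cotangents and tangents against exponentials directly, I would verify the equivalent inverse statement: that applying $\dft^{-1}$ to $-\rmi$ times the diagonal of $\So$ reproduces $\bmd$. Set $\tilde d_n=\frac{1}{N}\sum_{k=0}^{N-1}\paren*{-\rmi(\So)_{kk}}\rme^{2\pi\rmi nk/N}$. Because $(\So)_{kk}$ equals $+1$ on $k=1,\dots,\frac{N-1}{2}$ and $-1$ on the complementary block, pairing the index $k$ with $N-k$ (which sends $\rme^{2\pi\rmi nk/N}$ to its conjugate) collapses the two blocks into a single sine sum,
\begin{align*}
\tilde d_n=\frac{2}{N}\sum_{k=1}^{(N-1)/2}\sin\paren*{\frac{2\pi nk}{N}},
\end{align*}
and the task reduces to showing $\tilde d_n=d_n$.

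For this I would use the standard closed form $\sum_{k=1}^{m}\sin(k\theta)=\sin(m\theta/2)\sin((m+1)\theta/2)/\sin(\theta/2)$ with $\theta=2\pi n/N$ and $m=(N-1)/2$, followed by the product-to-sum identity $\sin(A-B)\sin(A+B)=\sin^2A-\sin^2B$, which recasts the sum as $\paren*{\sin^2(\pi n/2)-\sin^2(\pi n/(2N))}/\sin(\pi n/N)$. The decisive observation is that $\sin^2(\pi n/2)$ is precisely the parity indicator of $n$---equal to $1$ when $n$ is odd and $0$ when $n$ is even---so the single closed form splits into the two branches of \eqref{conv2}: a half-angle simplification of the denominator $\sin(\pi n/N)=2\sin(\pi n/(2N))\cos(\pi n/(2N))$ yields $\frac{1}{N}\cot(\pi n/(2N))$ for odd $n$ and $-\frac{1}{N}\tan(\pi n/(2N))$ for even $n$, i.e. exactly $d_n$. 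The degenerate indices $n\equiv0\pmod N$, where the sine sum vanishes and the closed form is inapplicable, I would handle directly: there $\tilde d_n=0=d_n$.

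I expect the only genuine work to be the parity case-split in the final trigonometric evaluation, together with the two bookkeeping points that legitimize the convolution-theorem reduction---verifying $\bmd\in\disT$ (which is exactly where oddness of $N$ enters) and isolating the degenerate frequencies $n\equiv0\pmod N$. Everything else is the routine machinery of the discrete Fourier transform, mirroring the even-$N$ argument behind \autoref{tm:lem41}.
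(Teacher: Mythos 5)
Your proof is correct and follows essentially the same route as the paper's: reduce via the discrete convolution theorem to showing $\dft^{-1}(-\rmi\,\widetilde{{\mathrm{\bf sgn}}})=\bmd$, pair each frequency $k$ with $N-k$ to collapse the two signed blocks into the sine sum $\frac{2}{N}\sum_{k=1}^{(N-1)/2}\sin\paren*{\frac{2\pi nk}{N}}$, and evaluate that sum in closed form with a parity case-split on $n$. The only differences are bookkeeping details you make explicit that the paper leaves implicit (the verification that $\bmd\in\disT$, which is where oddness of $N$ enters, and the degenerate indices $n\equiv 0\pmod N$).
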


\begin{proof}
The proof is similar to that of \autoref{tm:lem41}.
Since $(\dht \bmu)_n$ is written as a convolution in \eqref{conv2},
it immediately follows that$(\dft \dht \bmu)_n = (\dft \bmd)_n (\dft \bmu)_n$.
Hence, we wish to prove that $(\dft \bmd)_n = -\rmi \widetilde{\sgn} (n)$,
or equivalently $\dft^{-1}(-\rmi \widetilde{{\mathrm{\bf sgn}}})=\bmd$, where
$\widetilde{{\mathrm{\bf sgn}}}\in\disT$ and its components are
\begin{align*}
\widetilde{\sgn}(n) = \begin{cases}
0  & \text{if }n = 0, \\
1 & \text{if }1\leq n \leq \frac{N-1}{2}, \\
-1 & \text{if }\frac{N+1}{2}\leq n\leq N-1.
\end{cases}
\end{align*}
It follows that
\begin{align*}
\dft^{-1}(-\rmi \widetilde{{\mathrm{\bf sgn}}})_n
&=
\frac{-\rmi}{N} \sum_{k=0}^{N-1} \widetilde{\sgn} (n) \rme^{2\pi \rmi nk/N}
=
\frac{-\rmi}{N}\paren*{
\sum_{k=1}^{(N-1)/2} \rme^{2\pi \rmi nk/N} - \sum_{k=(N+1)/2}^{N-1} \rme^{2\pi \rmi nk/N}
}\\
&=
\frac{-\rmi}{N} \sum_{k=1}^{(N-1)/2} \paren*{\rme^{2\pi \rmi nk/N} - \rme^{-2\pi \rmi nk/N}}
=
\frac{2}{N} \sum_{k=1}^{(N-1)/2}  \sin\paren*{\frac{2\pi  nk}{N}} \\
&=
\frac{1}{N}\frac{\cos \paren*{\frac{\pi n}{N}} - (-1)^n}{2\sin \paren*{\frac{\pi n}{2N}} \cos \paren*{\frac{\pi n}{2N}}} \\
&=
\begin{cases}
\frac{1}{N} \cot \paren*{\frac{\pi n}{2N}},  & \text{if } n \text{ is odd},\\
-\frac{1}{N} \tan \paren*{\frac{\pi n}{2N}}, &  \text{if } n \text{ is even}
\end{cases} \\
&=
d_n
\end{align*}
\end{proof}

\begin{lemma}
Assume that $u$ is periodic and sufficiently smooth.
Then it follows that
\begin{align}
\| \dht \bmu - H u\| _\infty \leq C (\Dx)^2 \| u \| _{C^3},
\end{align}
where 
$Hu$ is an abbreviation of $[Hu(x_0),Hu(x_1),\dots,Hu(x_{N-1})]^\top$,
$\|\bmu\|_\infty = \max_n \|u_n\|$ and
$\|u\|_{C^3} = \max_x |u(x)| + \max_x |u^\prime(x)| + 
\max_x |u^{\prime\prime}(x)| + \max_x |u^{\prime\prime\prime}(x)|$.
\end{lemma}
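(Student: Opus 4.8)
The plan is to carry out the estimate entirely on the frequency side, mirroring the even-interval case, and to exploit the symbol representation just established. By the previous lemma, $\dht = \dft^{-1}(-\rmi\So)\dft$, so the discrete operator is the Fourier multiplier whose symbol is $-\rmi\,\widetilde{\sgn}(m)$ at the discrete frequency $m$, while the exact transform $H$ is the multiplier with symbol $-\rmi\,\sgn(k)$. The whole point is that these two symbols coincide on the frequencies that the grid can resolve, so the error is driven purely by aliasing of the unresolved high frequencies, which $C^3$-regularity suppresses at the rate $(\Dx)^2$.

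First I would expand the smooth periodic $u$ in its (continuous) Fourier series, $u(x)=\sum_{k\in\bbZ}\hat u_k\rme^{2\pi\rmi kx/l}$, so that $Hu(x_n)=\sum_{k\in\bbZ}(-\rmi\,\sgn(k))\hat u_k\rme^{2\pi\rmi kn/N}$. On the discrete side I would insert the grid samples $u_n=u(x_n)$ into $\dft$ and use the aliasing (Poisson-summation) identity $(\dft\bmu)_m = N\sum_{k\equiv m\ (\mathrm{mod}\ N)}\hat u_k$; applying $-\rmi\So$ and then $\dft^{-1}$ gives the companion representation $(\dht\bmu)_n=\sum_{k\in\bbZ}(-\rmi\,\widetilde{\sgn}(k\bmod N))\hat u_k\rme^{2\pi\rmi kn/N}$, the only change from $Hu(x_n)$ being that $\sgn(k)$ is replaced by the $N$-periodic symbol $\widetilde{\sgn}(k\bmod N)$. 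Subtracting,
\begin{align*}
(\dht\bmu)_n - Hu(x_n) = \rmi\sum_{k\in\bbZ}\bigl(\sgn(k)-\widetilde{\sgn}(k\bmod N)\bigr)\hat u_k\,\rme^{2\pi\rmi kn/N}.
\end{align*}

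The key step is to check, directly from the definition of $\widetilde{\sgn}$ and the fact that $N$ is odd, that $\widetilde{\sgn}(k\bmod N)=\sgn(k)$ for every $k$ with $|k|\le (N-1)/2$ (and for $k=0$): a resolved positive frequency maps into $\{1,\dots,(N-1)/2\}$ and a resolved negative frequency into $\{(N+1)/2,\dots,N-1\}$, which are exactly the two blocks of $\So$. Hence all resolved modes cancel and the sum runs only over the aliased frequencies $|k|\ge(N+1)/2$, where the symbol difference is at most $2$ in modulus. I would then invoke the decay $|\hat u_k|\le C\|u\|_{C^3}|k|^{-3}$, obtained by integrating the Fourier integral by parts three times (the boundary terms vanishing by periodicity), and bound the tail by $\sum_{|k|\ge(N+1)/2}|k|^{-3}\le C/N^2$. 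Since $N=l/\Dx$, this yields $\|\dht\bmu - Hu\|_\infty\le C(\Dx)^2\|u\|_{C^3}$ uniformly in $n$.

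The main obstacle is conceptual rather than computational: a direct, interval-by-interval analysis of the composite quadrature in \eqref{DHT} does not obviously give second order, because near the singularities of $\cot$ and $\tan$ the midpoint error on the $j$-th interval is governed by the second derivative of the integrand, which grows like $y^{-3}$, so the per-interval errors do not sum to $O((\Dx)^2)$. Passing to the frequency domain removes this difficulty, since the principal-value behaviour is encoded cleanly in the bounded symbol; the only delicate points that remain are the exact index-matching that makes the resolved frequencies cancel---here the oddness of $N$ is convenient, as there is no ambiguous Nyquist mode---and verifying that $C^3$ regularity is precisely the amount of smoothness needed for the aliasing tail to be $O((\Dx)^2)$.
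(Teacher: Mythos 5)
Your argument is correct, but it proves the lemma by a genuinely different route from the paper. The paper stays entirely on the physical side: it first combines the $\cot$ and $\tan$ sums of \eqref{DHT} into the single antisymmetric form $\frac{1}{2l}\sum_m 2\Dx\,\cot(\pi x_{2m-1}/2l)\,(u_{n-(2m-1)}-u_{n+(2m-1)})$, matching the rewriting $Hu(x)=\frac{1}{2l}\lim_{\epsilon\to+0}\int_\epsilon^l\cot(\pi y/2l)(u(x-y)-u(x+y))\,\rmd y$, and then applies the standard midpoint-rule error estimate on each double interval plus a rectangle-rule correction at the boundary. The singularity you worry about in your final paragraph is precisely what this symmetrization removes: the integrand is $\psi(y/2l)\,\phi(x,y)$ with $\psi(y)=y\cot(\pi y)$ smooth on the relevant range and $\phi(x,y)=(u(x-y)-u(x+y))/y$ satisfying $\|\phi(x,\cdot)\|_{C^2}\le 2\|u\|_{C^3}$ by Taylor expansion, so the per-interval second derivative is uniformly bounded rather than growing like $y^{-3}$, and the direct quadrature analysis does give second order. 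Your frequency-side proof instead uses the preceding lemma ($\dht=\dft^{-1}(-\rmi\So)\dft$), the aliasing identity, the exact agreement of $\widetilde{\sgn}(k\bmod N)$ with $\sgn(k)$ on $|k|\le(N-1)/2$ (where the oddness of $N$ conveniently leaves no ambiguous Nyquist mode), and the decay $|\hat u_k|\le C\|u\|_{C^3}|k|^{-3}$; all steps check out, including the tail bound $\sum_{|k|\ge(N+1)/2}|k|^{-3}\le CN^{-2}$. What each approach buys: yours is shorter, shows that the error is pure aliasing (hence spectral accuracy for band-limited data, and $O((\Dx)^{s-1})$ for $u\in C^s$ more generally), but leans entirely on the symbol lemma; the paper's is more elementary, parallels Thom\'ee--Vasudeva Murthy's original even-$N$ analysis, and directly validates the quadrature construction by which the discrete transform was defined in the first place.
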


\begin{proof}
The proof is similar to that of \autoref{tm:lem42}.
First, we re-express the continuous and discrete Hilbert transforms (\eqref{HT1} and \eqref{DHT})
as follows:
\begin{align*}
Hu(x) &=
\frac{1}{2l} \lim_{\epsilon\to+0} \int_\epsilon^l \cot \paren*{\frac{\pi y}{2l}}
\paren*{u(x-y)-u(x+y)}\,\rmd y, \\
(\dht \bmu)_n &=
\frac{1}{2l}
\sum_{m=1}^{(N-1)/2} 2\Dx
\cot \paren*{\frac{\pi x_{2m-1}}{2l}}
\paren*{u_{n-(2m-1)} - u_{n+(2m-1)}}.
\end{align*}
We here define three function $\psi (y)$, $\phi(x,y)$ and $\delta (x,y)$ by
\begin{align}
\psi (y) &= y\cot(\pi y), \\
\phi(x,y) &= \frac{u(x-y)-u(x+y)}{y}, \\
\delta (x,y) &= \psi \paren*{\frac{y}{2l}} \phi(x,y) = \frac{1}{2l} \cot \paren*{\frac{\pi y}{2l}} \paren*{u(x-y)-u(x+y)}.
\end{align}
Below, by using these functions, we show that $| (\dht \bmu)_n - Hu(x_n) | \leq C (\Dx)^2 \| u \| _{C^3}$.
Hereafter, we promise that $C$ is a generic constant independently of $\Dx$ and $u$.
First, we rewrite $| (\dht \bmu)_n - Hu(x_n) |$ as follows.
\begin{align}
&| (\dht \bmu)_n - Hu(x_n) | \\
&= 
\left|
\frac{1}{2l}
\sum_{m=1}^{(N-1)/2} 2\Dx
\cot \paren*{\frac{\pi x_{2m-1}}{2l}}
\paren*{u_{n-(2m-1)} - u_{n+(2m-1)}} -
\frac{1}{2l} \lim_{\epsilon\to+0} \int_\epsilon^l \cot \paren*{\frac{\pi y}{2l}}
\paren*{u(x_n-y)-u(x_n+y)}\,\rmd y
\right| \\
&=
\left|
\sum_{m=1}^{(N-1)/2} \int_{x_{2m-2}}^{x_{2m}} \paren*{\delta (x_n,x_{2m-1}) - \delta (x_n,y)}\,\rmd y
+ \int_{x_{N-1}}^{x_N} \paren*{\delta (x_n,x_{N}) - \delta (x_n,y)}\,\rmd y
\right|.
\end{align}
By the standard error estimate for the midpoint and rectangular rules, there exist constants
$x_{2m-2}<\xi_m<x_{2m}$ ($m=1,\dots,(N-1)/2$) and $x_{N-1}<\eta<x_N$, and we proceed with the estimate
\begin{align}
&| (\dht \bmu)_n - Hu(x_n) | \\
&=
\left|
\sum_{m=1}^{(N-1)/2} \frac{(2\Dx)^3}{24} \fracpar{^2\delta}{y^2} (x_n,\xi_m)
+ \frac{(\Dx)^2}{2} \fracpar{\delta}{y} (x_n,\eta) 
\right| \\
&\leq
\max_{0\leq y \leq l} \left| \fracpar{^2\delta}{y^2} (x_n,y) \right|
\sum_{m=1}^{(N-1)/2} \frac{(\Dx)^3}{3}
+\max_{0\leq y \leq l} \left| \fracpar{\delta}{y} (x_n,y) \right| \frac{(\Dx)^2}{2} \\
&\leq
\max_{0\leq y \leq l} \left| \fracpar{^2\delta}{y^2} (x_n,y) \right|
\frac{l(\Dx)^2}{6}
+\max_{0\leq y \leq l} \left| \fracpar{\delta}{y} (x_n,y) \right| \frac{(\Dx)^2}{2}.
\end{align}
Here we note that $\psi$ is independent of $u$ and thus its norms are bounded.
Thus we have
\begin{align}
\max_{0\leq y \leq l} \left| \fracpar{^2\delta}{y^2} (x_n,y) \right| \leq C \| \phi (x_n,\cdot) \| _{C^2},
\qquad 
\max_{0\leq y \leq l} \left| \fracpar{\delta}{y} (x_n,y) \right| \leq C \| \phi (x_n,\cdot) \| _{C^1}.
\end{align}
By the Taylor expansion,
\begin{align*}
\| \phi (x_n,\cdot) \| _{C^2} \leq 2\|u\| _{C^3}, \qquad \| \phi (x_n,\cdot) \| _{C^1} \leq 2\|u\| _{C^3}
\end{align*}
from which we obtain $| (\dht \bmu)_n - Hu(x_n) | \leq C (\Dx)^2 \| u \| _{C^3}$.
\end{proof}

\subsubsection{Symmetric discretization of $L$}

We now define a discrete version of the operator $L=H\px$.
Since the discrete Hilbert transform defined above is formulated in terms of the discrete Fourier transform,
one convenient way for discretizing $\px$ is to employ
the spectral difference, i.e.,
for $\bmu = \{u_n\}\in\disT$, 
\begin{align} \label{spec}
\ddo^\infty \bmu = \dft^{-1} \paren*{ \rmi \frac{2\pi}{l} \tilde{K} }\dft \bmu ,
\end{align}
where $\tilde{K}\in\bbR^{N\times N}$ is defined by
\begin{align}
\tilde{K} = \begin{cases}
\diag\paren*{0,1,\dots,\frac{N}{2}-1,0,-\paren*{\frac{N}{2}-1},\dots,-1}, & \text{if } N \text{ is even}, \\
\diag\paren*{0,1,\dots,\frac{N-1}{2},-\paren*{\frac{N-1}{2}},\dots,-1}, & \text{if } N \text{ is odd}.
\end{cases}
\end{align}
Note that this is a real matrix (see, for example,~\cite{fo96}).


We now define an operator $\dlt$ by
\begin{align}
\dlt \bmu = \dft^{-1}
\paren*{
\frac{2\pi}{l} \tilde{K}
S} \dft \bmu,
\end{align}
where $S=\Se$ if $N$ is even, and $S=\So$ if $N$ is odd.
This is a symmetric matrix as the next lemma shows, which is crucial
in the subsequent theoretical analyses.

\begin{lemma}
The operator $\dlt$ is symmetric in the sense that ${\dlt}^\top = \dlt$.
\end{lemma}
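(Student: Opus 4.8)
The plan is to exploit the fact that $\dlt$ is a Fourier multiplier operator: $\dlt = \dft^{-1} D \dft$ with the real diagonal matrix $D = \frac{2\pi}{l}\tilde K S$. Such an operator is a circulant (convolution) operator on $\disT$, and for circulant operators symmetry is equivalent to the evenness of the symbol. Concretely, I would first record two elementary matrix facts about the discrete Fourier transform: the DFT matrix $\dft$ with entries $(\dft)_{kn} = \rme^{-2\pi\rmi nk/N}$ is symmetric, $\dft^\top = \dft$, and since $\dft^{-1} = \frac{1}{N}\overline{\dft}$ its inverse is symmetric as well, $(\dft^{-1})^\top = \dft^{-1}$.

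Using these, together with $D^\top = D$ (as $D$ is diagonal), I would compute $\dlt^\top = \dft^\top D^\top (\dft^{-1})^\top = \dft D \dft^{-1}$. Comparing with $\dlt = \dft^{-1} D \dft$, the claim $\dlt^\top = \dlt$ is then equivalent to $D$ commuting with $\dft^2$. A short calculation gives $(\dft^2)_{mn} = \sum_k \rme^{-2\pi\rmi(m+n)k/N}$, which equals $N$ when $m+n \equiv 0 \pmod N$ and $0$ otherwise; that is, $\dft^2 = N P$, where $P$ is the reversal permutation sending index $n$ to $-n \bmod N$. Conjugation of the diagonal $D$ by $P$ merely reflects its diagonal, so $DP = PD$ holds precisely when the diagonal entry at index $n$ equals that at index $-n \bmod N$, i.e. when the symbol $\sigma_n = \frac{2\pi}{l}\tilde K_{nn} S_{nn}$ is an even function of $n$ modulo $N$.

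The crux is therefore to verify this evenness, and here I would use that both factors are \emph{odd} symbols. From the explicit definitions, $\tilde K_{nn}$ is the symbol of $\px$ while $S_{nn}$ (either $\Se$ or $\So$) is the symbol of $H$ up to the factor $-\rmi$, and each satisfies $\tilde K_{(-n)(-n)} = -\tilde K_{nn}$ and $S_{(-n)(-n)} = -S_{nn}$; their product is then even, $\sigma_{-n} = \sigma_n$. I would read off the odd symmetry directly from the diagonals for both parities of $N$. The main obstacle, though a mild one, is the bookkeeping at the exceptional indices where entries vanish: $n=0$ for all $N$, and additionally $n=N/2$ when $N$ is even. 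At these indices the reflection $n \mapsto -n \bmod N$ is a fixed point and the corresponding diagonal entry is $0$, so evenness holds trivially; away from them the two nonzero blocks map into one another under the reflection with a sign flip in each factor, confirming $\sigma_{-n} = \sigma_n$ and hence $\dlt^\top = \dlt$. (Equivalently, one may run the whole argument through the convolution kernel $\bmg$ obtained as the inverse DFT of the symbol sequence $\{\sigma_n\}$: the matrix entries are $(\dlt)_{mn} = g_{m-n}$, so $\dlt^\top = \dlt$ reduces to $g_{-n} = g_n$, which again follows from the evenness of the symbol.)
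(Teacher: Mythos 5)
Your proof is correct, but it takes a genuinely different route from the paper's. The paper argues in two steps: first, $\dlt$ is a \emph{real} matrix (because the discrete Hilbert transform and the spectral difference operator are both real, which is read off from their convolution definitions); second, $\dlt$ is Hermitian, since $\dft^* = N\dft^{-1}$ makes the DFT a scaled unitary and conjugating the real diagonal symbol $\frac{2\pi}{l}\tilde K S$ by it gives $\dlt^* = \dlt$; real plus Hermitian yields symmetric. You instead work entirely with transposes: you use the symmetry of the DFT matrix itself, reduce $\dlt^\top = \dlt$ to the commutation of the symbol with $\dft^2 = NP$ ($P$ the index-reversal permutation), and verify that the symbol is even because it is the product of the two \emph{odd} symbols of $\px$ and $H$, with the fixed points $n=0$ (and $n=N/2$ for even $N$) handled by the vanishing diagonal entries. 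The two arguments are equivalent in content --- for a circulant operator with real symbol, realness of the matrix is precisely evenness of the symbol --- but they distribute the work differently: the paper's version is shorter and leans on the realness of the component matrices, which it asserts as clear from the original definitions, while yours is self-contained at the level of the Fourier symbols, requires the explicit index bookkeeping, and makes visible the structural reason for the symmetry (odd times odd is even), which also explains why each factor $\dht$ and $\ddo^\infty$ on its own is antisymmetric rather than symmetric.
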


\begin{proof}
Note that $H_{\Delta x}$ is a real matrix for both even and odd $N$,
which is clear from the original definition.
As noted above, the spectral difference operator is also real.
Thus the matrix $\dlt$ is real, and it suffices to show 
$(\dlt)^{*} = \dlt$, where $(\,\cdot\,)^*$ denotes the Hermitian conjugate.
But it is clear since $\tilde{K}$ and $S$ are both real diagonal matrices.
\end{proof}


\subsection{An Euler box scheme}
\label{sec33}
We apply the Euler box scheme \eqref{Eb2} to the formulation \eqref{ems}
(just for simplicity, we consider the simplified version only).
To do this, we introduce the notation
\begin{align} \label{bds}
\fracdel{\calS}{z}(z_n^i)
= 
\begin{bmatrix}
-w_n^i 
-\gamma u_n^i -\frac{\lambda}{2}(u_n^i)^2 + \frac{\alpha}{2}u_n^i (\dlt \bmu^i)_n 
\\
0
\\
-u_n^i
\\
\beta v_n^i
\end{bmatrix}.
\end{align}
The Euler box scheme for the Benjamin equation reads
\begin{align} \label{bEb2}
M \dt z_n^i + K \dx z_n^i = \fracdel{\calS}{z}  (z_n^i),
\end{align}

We consider if the global property \eqref{bmsgcl} is inherited in the discrete setting.

\begin{theorem}\label{th1:eb}
For the Euler box scheme \eqref{bEb2}, it follows that
\begin{align*}
\dt \omega_n^i + \dx \kappa _n^i = \alpha \rmd u_n^i \wedge (\dlt \rmd \bmu^i)_n
\end{align*}
and 
\begin{align}\label{bebgcl}
\dt^+ \sum_{n=0}^{N-1} \omega _n^i = 0,
\end{align}
where 
\begin{align}
\omega_n^i = \frac{1}{2} \rmd z_n^{i-1} \wedge M \rmd z_n^i, 
\qquad
\kappa_n^i = \frac{1}{2} \rmd z_{n-1}^i \wedge K \rmd z_n^i.
\end{align}
\end{theorem}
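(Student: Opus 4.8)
The plan is to reproduce at the discrete level the calculation that led to \eqref{bmscl} and \eqref{bmsgcl}, with the symmetry of $\dlt$ now playing the role that the symmetry of $L$ played there. First I would apply the exterior derivative $\rmd$ in the phase variables to the Euler box scheme \eqref{bEb2}, obtaining the discrete variational equation $M\dt\rmd z_n^i + K\dx\rmd z_n^i = \rmd(\fracdel{\calS}{z}(z_n^i))$. The next step is to differentiate the right-hand side \eqref{bds}: every entry is a local polynomial in the components of $z_n^i$ except the one carrying $\dlt$, so $\rmd(\fracdel{\calS}{z}(z_n^i))$ splits as $A_n^i\,\rmd z_n^i + \alpha\,[\,(\dlt\rmd\bmu^i)_n,\,0,\,0,\,0\,]^\top$, where $A_n^i$ is the symmetric local Hessian of $S$ at $z_n^i$ and the second piece is the only non-local contribution. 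This is the exact discrete analogue of the matrix computed just before \eqref{bmscl}.

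For the pointwise identity I would wedge the discrete variational equation with $\rmd z_n^i$ on the left. On the right, $\rmd z_n^i\wedge A_n^i\rmd z_n^i=0$ because $A_n^i$ is symmetric while the wedge is antisymmetric, and only the first slot of the non-local term survives, leaving $\alpha\,\rmd u_n^i\wedge(\dlt\rmd\bmu^i)_n$. On the left, the skew-symmetry of $M$ and $K$ yields the identities $a\wedge Mb=b\wedge Ma$ and $a\wedge Kb=b\wedge Ka$; combining these with the definitions of $\omega_n^i$ and $\kappa_n^i$ and expanding the central differences, the backward shifts built into $\omega_n^i,\kappa_n^i$ make the left-hand side collapse to the forward-difference form $\dt^+\omega_n^i+\dx^+\kappa_n^i$ of the discrete multi-symplectic law, exactly as in \autoref{sec2}. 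This establishes the first displayed equation.

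The global identity then follows by summing the pointwise one over $n=0,\dots,N-1$. The flux $\sum_n\dx^+\kappa_n^i$ telescopes to zero under the periodic boundary condition. The only remaining point is that the non-local source also sums to zero, and this is exactly where the symmetric discretization of $L$ is indispensable: writing $(\dlt\rmd\bmu^i)_n=\sum_m(\dlt)_{nm}\rmd u_m^i$ gives $\sum_n\rmd u_n^i\wedge(\dlt\rmd\bmu^i)_n=\sum_{n,m}(\dlt)_{nm}\,\rmd u_n^i\wedge\rmd u_m^i$, which vanishes because $(\dlt)_{nm}$ is symmetric in $n,m$ while $\rmd u_n^i\wedge\rmd u_m^i$ is antisymmetric. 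Hence $\dt^+\sum_n\omega_n^i=0$, which is \eqref{bebgcl}.

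I expect the only delicate part to be the bookkeeping in the second paragraph---matching the index shifts in $\omega_n^i,\kappa_n^i$ against the central differences so that the left-hand side assembles correctly into $\dt^+\omega_n^i+\dx^+\kappa_n^i$---since a misplaced shift there would also spoil the telescoping used in the global step. The conceptual content, in contrast, is light once the right-hand side has been split: both the vanishing of the local Hessian wedge and the vanishing of the summed non-local source are instances of ``symmetric contracted with antisymmetric is zero,'' the latter resting entirely on ${\dlt}^\top=\dlt$.
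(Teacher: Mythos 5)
Your proof is correct and follows essentially the same route as the paper's own (very terse) proof: wedge the discrete variational equation with $\rmd z_n^i$, discard the symmetric local Hessian against the antisymmetric wedge, and sum over $n$ using ${\dlt}^\top=\dlt$ to annihilate the non-local source. You in fact supply the two details the paper glosses over --- the correct forward-difference form $\dt^+\omega_n^i+\dx^+\kappa_n^i$ of the left-hand side (the central differences in the theorem statement appear to be a typo, cf.\ the conservation law stated in \autoref{sec2}) and the explicit symmetric-versus-antisymmetric cancellation hiding behind ``immediately indicates'' --- so nothing is missing.
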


\begin{proof}
We consider the discrete variational equation
\begin{align}
M\dt \rmd z_n^i + K\dx \rmd z_n^i = \rmd \paren*{\fracdel{\calS}{z}(z_n^i)}.
\end{align}
It then follows that
\begin{align*}
\dt^+ \omega_n^i + \dx \kappa _n^i  
= \rmd z_n^i \wedge M \dt \rmd z_n^i + \rmd z_n^i \wedge K \dx \rmd z_n^i 
= \rmd z_n^i \wedge \rmd \paren*{\fracdel{\calS}{z}(z_n^i)}
= \alpha \rmd u_n^i \wedge \dlt (\rmd \bmu^i)_n,
\end{align*}
which immediately indicates \eqref{bebgcl}.
\end{proof}

Next we consider the global conservation laws \eqref{bgcle} and \eqref{bgcli}.
As is the case with standard multi-symplectic PDEs and multi-symplectic discretization methods,
such invariants cannot be preserved in the fully-discrete setting.
However, as shown below, semi-discrete schemes possess one of such invariants.

\begin{proposition}\label{prop1:eb}
For the semi-discrete scheme
\begin{align} \label{bEbs1}
M \pt z_n + K \dx z_n = \fracdel{\calS}{z} (z_n), 
\end{align}
it follows that
\begin{align} \label{bEbgcle}
\pt \sum_{n=0}^{N-1} E_n = 0, \qquad
\text{where}
\qquad
E_n = S(z_n) + \frac12 (\dx^- z_n)^\top Kz_n.
\end{align}
For the semi-discrete scheme
\begin{align} \label{bEbs2}
M\dt z^i + K \px z^i = \fracdel{\calS}{z} (z^i),
\end{align}
it follows that
\begin{align} \label{bEbgcli}
\dt^+ \int_\bbT I^i \,\rmd x = 0,
\qquad
\text{where}
\qquad
I^i = -\frac12 (\px z^{i-1})^\top M z^i.
\end{align}
\end{proposition}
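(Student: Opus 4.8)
The plan is to mimic, at the semi-discrete level, the two continuous derivations that produced the global conservation laws \eqref{bgcle} and \eqref{bgcli}. Recall that continuously the energy identity came from pairing \eqref{ems} with $z_t$ (killing the $M$-term by skew-symmetry), and the momentum identity from pairing with $z_x$ (killing the $K$-term); in both the only obstruction was an $\alpha$-term built from the non-local operator, which integrated to zero by the symmetry of $L$. I expect each semi-discrete statement to reduce to exactly this pattern, with the discretized direction handled by a discrete summation-by-parts and the symmetry of $\dlt$ (resp. of $L$) supplying the crucial cancellation.

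For the spatially discrete scheme \eqref{bEbs1}, I would take the Euclidean inner product of the equation with $\pt z_n$. The term $(\pt z_n)^\top M \pt z_n$ vanishes since $M$ is skew-symmetric, leaving
\begin{align*}
(\pt z_n)^\top K \dx z_n = (\pt z_n)^\top \fracdel{\calS}{z}(z_n).
\end{align*}
A direct expansion, parallel to the computation preceding \eqref{emslcl1}, rewrites the right-hand side as $\pt S(z_n)$ plus an $\alpha$-term of the schematic form $\tfrac{\alpha}{2}\bigl[(\pt u_n)(\dlt\bmu)_n - u_n(\dlt\pt\bmu)_n\bigr]$. Summing over $n=0,\dots,N-1$, this $\alpha$-term drops out: its two pieces equal $(\pt\bmu)^\top\dlt\bmu$ and $\bmu^\top\dlt\pt\bmu$, which coincide because $\dlt^\top=\dlt$ (proved earlier). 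Hence $\sum_n(\pt z_n)^\top K\dx z_n = \pt\sum_n S(z_n)$. It then remains to turn the left-hand sum into a time derivative: writing $\dx = \tfrac12(\dx^+ + \dx^-)$ and applying discrete summation-by-parts under periodicity (so that $\dx^+$ and $\dx^-$ are mutually adjoint up to sign), together with $K^\top=-K$, identifies $\sum_n(\pt z_n)^\top K\dx z_n$ with $-\tfrac12\pt\sum_n(\dx^- z_n)^\top K z_n$. Combining gives $\pt\sum_n\bigl[S(z_n)+\tfrac12(\dx^- z_n)^\top K z_n\bigr]=\pt\sum_n E_n=0$.

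For the temporally discrete scheme \eqref{bEbs2}, I would instead integrate the $L^2(\bbT)$ inner product of the equation with $\px z^i$ over $\bbT$. Now $(\px z^i)^\top K\px z^i=0$ by skew-symmetry of $K$, leaving $\int_\bbT(\px z^i)^\top M\dt z^i\,\rmd x = \int_\bbT(\px z^i)^\top \fracdel{\calS}{z}(z^i)\,\rmd x$. Parallel to the computation giving \eqref{emslcl2}, the right-hand integrand equals $\px S(z^i)$ plus the $\alpha$-term $\tfrac{\alpha}{2}(\px u^i\,L u^i - u^i\,L\px u^i)$; the former integrates to zero by periodicity, and the latter by the symmetry $\int_\bbT u\,Lv\,\rmd x=\int_\bbT(Lu)v\,\rmd x$. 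Thus $\int_\bbT(\px z^i)^\top M\dt z^i\,\rmd x=0$. Finally I would match this to $\dt^+\int_\bbT I^i\,\rmd x$: expanding the difference quotient of $I^i=-\tfrac12(\px z^{i-1})^\top M z^i$ and using spatial integration by parts together with $M^\top=-M$ to replace $(\px z^{i-1})^\top M z^i$ by $(\px z^i)^\top M z^{i-1}$ under the integral shows $\dt^+\int_\bbT I^i\,\rmd x = -\int_\bbT(\px z^i)^\top M\dt z^i\,\rmd x=0$.

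The routine part is the sign bookkeeping in the summation- and integration-by-parts steps and the skew-symmetry swaps, which I would carry out explicitly but expect to be harmless. The one genuinely load-bearing point—the same as in the continuous theory—is that the non-local $\alpha$-term does \emph{not} cancel pointwise: it vanishes only after summation (resp. integration), and precisely because $\dlt$ (spatial-discrete case) and $L$ (temporal-discrete case) are symmetric. This is why the symmetry of $\dlt$ established earlier is indispensable, and why the two semi-discretizations preserve different invariants: the spatial discretization retains the energy $\sum_n E_n$, while the temporal discretization retains the momentum $\int_\bbT I^i\,\rmd x$.
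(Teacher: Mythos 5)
Your proposal is correct and follows essentially the same route as the paper: pair the scheme with $\pt z_n$ (resp.\ $\px z^i$), kill the $M$ (resp.\ $K$) term by skew-symmetry, and observe that the residual $\alpha$-term vanishes only after summation over $n$ (resp.\ integration over $\bbT$) thanks to the symmetry of $\dlt$ (resp.\ $L$). The only cosmetic difference is that the paper first derives a local balance law and then telescopes it, whereas you sum first and apply discrete summation by parts globally; you also spell out the momentum half, which the paper dismisses as ``similar.''
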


\begin{proof}
We only prove \eqref{bEbgcle} because \eqref{bEbgcli} can be proved in a similar manner.

Taking the inner product \eqref{bEbs1} with $\pt z_n$, we obtain
\begin{align}
(\pt z_n)^\top K \dx^+ z_n = (\pt z_n)^\top \fracdel{\calS}{z} (z_n)
\end{align}
because of the skew-symmetry of $M$.
Noticing that
\begin{align}
(\pt z_n)^\top K \dx^+ z_n
=
\dx^+ \paren*{\frac12 (\pt z_{n-1})^\top Kz_n} - \pt \paren*{\frac12 (\dx^- z_n)^\top K z_n}
\end{align}
and
\begin{align}
(\pt z_n)^\top \fracdel{\calS}{z} (z_n)
=
\pt S(z_n) + \frac{\alpha}{2} (\pt u_n)(\dlt \bmu)_n - \frac{\alpha}{2} u_n(\dlt (\pt \bmu))_n,
\end{align}
we have
\begin{align}
\pt E_n + \dx^+ \paren*{-\frac12 (\pt z_{n-1})^\top Kz_n}
= 
- \frac{\alpha}{2} (\pt u_n)(\dlt \bmu)_n + \frac{\alpha}{2} u_n (\dlt (\pt \bmu))_n.
\end{align}
from which \eqref{bEbgcle} follows.
\end{proof}

\begin{remark}
For the fully-discrete scheme \eqref{bEb2}, the mass conservation 
\begin{align}
\dt \sum_{n=0}^{N-1} u_n^i = 0
\end{align}
is satisfied.
\end{remark}

\subsection{A Preissmann box scheme}
\label{sec34}
The Preissmann box scheme for the Benjamin equation reads
\begin{align} \label{bPb}
M \dt^+ z_{n+1/2}^i + K \dx^+ z_n^{i+1/2} = \fracdel{\calS}{z} (z_{n+1/2}^{i+1/2}).
\end{align}

\begin{theorem}
For the Preissmann box scheme \eqref{bPb}, it follows that
\begin{align*}
\dt^+ \omega_{n+1/2}^i + \dx^+ \kappa _n^{i+1/2} = \alpha \rmd u_{n+1/2}^{i+1/2} \wedge (\dlt \rmd \bmu^{i+1/2})_{n+1/2}
\end{align*}
and 
\begin{align}\label{bpbgcl}
\dt^+ \sum_{n=0}^{N-1} \omega _{n+1/2}^i = 0,
\end{align}
where 
\begin{align}
\omega_{n+1/2}^i = \frac{1}{2} \rmd z_{n+1/2}^i \wedge M \rmd z_{n+1/2}^i, 
\qquad
\kappa_n^{i+1/2} = \frac{1}{2} \rmd z_n^{i+1/2} \wedge K \rmd z_n^{i+1/2}.
\end{align}
\end{theorem}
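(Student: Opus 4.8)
The plan is to follow the proof of \autoref{th1:eb} for the Euler box scheme, the only genuinely new feature being the midpoint averaging of the Preissmann box. First I would apply $\rmd$ to the scheme \eqref{bPb} to obtain the discrete variational equation $M\dt^+\rmd z_{n+1/2}^i + K\dx^+\rmd z_n^{i+1/2} = \rmd\paren*{\fracdel{\calS}{z}(z_{n+1/2}^{i+1/2})}$. The crucial algebraic ingredient is then a midpoint product rule: for a skew-symmetric matrix $M$ and vectors of one-forms $a,b$, bilinearity of the wedge together with $M^\top=-M$ gives $a\wedge Ma - b\wedge Mb = (a-b)\wedge M(a+b)$. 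Taking $a=\rmd z_{n+1/2}^{i+1}$ and $b=\rmd z_{n+1/2}^i$ turns $\dt^+\omega_{n+1/2}^i$ into $\dt^+\rmd z_{n+1/2}^i\wedge M\rmd z_{n+1/2}^{i+1/2}$, and the analogous spatial computation turns $\dx^+\kappa_n^{i+1/2}$ into $\dx^+\rmd z_n^{i+1/2}\wedge K\rmd z_{n+1/2}^{i+1/2}$; note that both averagings collapse to the same full midpoint value $\rmd z_{n+1/2}^{i+1/2}$. Using $M^\top=-M$ and $K^\top=-K$ once more to carry $M,K$ across the wedge, I would sum these into $\rmd z_{n+1/2}^{i+1/2}\wedge\paren*{M\dt^+\rmd z_{n+1/2}^i + K\dx^+\rmd z_n^{i+1/2}}$ and substitute the variational equation, obtaining $\dt^+\omega_{n+1/2}^i + \dx^+\kappa_n^{i+1/2} = \rmd z_{n+1/2}^{i+1/2}\wedge\rmd\paren*{\fracdel{\calS}{z}(z_{n+1/2}^{i+1/2})}$.

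The Jacobian $\rmd\paren*{\fracdel{\calS}{z}}$ computed in \autoref{sec31} splits into a symmetric local $4\times4$ part and the nonlocal operator $\alpha\dlt$ acting inside the $u$-component. Since $\rmd z\wedge B\rmd z=0$ for any symmetric $B$ (antisymmetry of the wedge against the symmetry of $B$), the local part drops out pointwise, and only the nonlocal term survives, leaving exactly $\alpha\rmd u_{n+1/2}^{i+1/2}\wedge(\dlt\rmd\bmu^{i+1/2})_{n+1/2}$. This is the first assertion; unlike the local part, it does not vanish pointwise and is cancelled only globally.

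For the global identity \eqref{bpbgcl} I would sum over $n=0,\dots,N-1$. The term $\sum_n\dx^+\kappa_n^{i+1/2}$ telescopes to zero under the periodic boundary condition, so it remains to show $\sum_n\rmd u_{n+1/2}^{i+1/2}\wedge(\dlt\rmd\bmu^{i+1/2})_{n+1/2}=0$. Writing $\hat a_n=\rmd u_{n+1/2}^{i+1/2}$ and using that $\dlt$, being circulant, commutes with the spatial averaging $b_n\mapsto b_{n+1/2}$, the summand becomes $\hat a_n\wedge(\dlt\hat a)_n=\sum_m(\dlt)_{nm}\,\hat a_n\wedge\hat a_m$; swapping $n\leftrightarrow m$ and invoking the symmetry $(\dlt)^\top=\dlt$ proved earlier together with $\hat a_n\wedge\hat a_m=-\hat a_m\wedge\hat a_n$ forces the double sum to equal its own negative, hence to vanish. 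The main obstacle is entirely the half-index bookkeeping—establishing the midpoint product rule and the commutation of $\dlt$ with the averaging operator; once these are in hand the argument is structurally identical to the Euler box case.
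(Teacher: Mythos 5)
Your proof is correct and follows exactly the route the paper intends: the paper itself only remarks that this theorem ``is proved in a similar manner to \autoref{th1:eb}'', and your argument is that proof adapted to the midpoint averaging of the Preissmann box. The two details you supply explicitly --- the product rule $a\wedge Ma - b\wedge Mb = (a-b)\wedge M(a+b)$ for skew-symmetric $M$, and the fact that the circulant $\dlt$ commutes with the spatial averaging so that the symmetry $\dlt^\top=\dlt$ together with the antisymmetry of the wedge annihilates the sum over $n$ --- are precisely the half-index bookkeeping the paper leaves implicit, and both are valid.
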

This theorem is proved in a similar manner to \autoref{th1:eb}.

\begin{proposition}
For the semi-discrete scheme
\begin{align} \label{bPbs1}
M \pt z_{n+1/2} + K \dx^+ z_n = \fracdel{\calS}{z} (z_{n+1/2}), 
\end{align}
it follows that
\begin{align} \label{bPbgcle}
\pt \sum_{n=0}^{N-1} E_{n+1/2} = 0, \qquad
\text{where}
\qquad
E_{n+1/2} = S(z_{n+1/2}) + \frac12 (\dx^+ z_n)^\top Kz_{n+1/2}.
\end{align}
For the semi-discrete scheme
\begin{align} \label{bPbs2}
M\dt^+ z^i + K \px z^{i+1/2} = \fracdel{\calS}{z} (z^{i+1/2}),
\end{align}
it follows that
\begin{align} \label{bPbgcli}
\dt^+ \int_\bbT I^i \,\rmd x = 0,
\qquad
\text{where}
\qquad
I^i = -\frac12 (\px z^i)^\top M z^i.
\end{align}
\end{proposition}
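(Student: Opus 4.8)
The plan is to follow the pattern of \autoref{prop1:eb}, replacing the one-sided differences used there by the midpoint averaging of the Preissmann scheme. I would treat the energy identity \eqref{bPbgcle} first, working from the spatial semi-discretization \eqref{bPbs1}; the momentum identity \eqref{bPbgcli} is then handled by the analogous argument sketched afterwards, with the roles of the space and time variables (and of $K$ and $M$) interchanged.

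For \eqref{bPbgcle} I would take the $\bbR^4$ inner product of \eqref{bPbs1} with $\pt z_{n+1/2}$. The term $(\pt z_{n+1/2})^\top M \pt z_{n+1/2}$ vanishes by skew-symmetry of $M$, leaving
\begin{align*}
(\pt z_{n+1/2})^\top K \dx^+ z_n = (\pt z_{n+1/2})^\top \fracdel{\calS}{z}(z_{n+1/2}).
\end{align*}
I would rewrite the left-hand side through the discrete product rule
\begin{align*}
(\pt z_{n+1/2})^\top K \dx^+ z_n
= \dx^+\paren*{\tfrac12 (\pt z_n)^\top K z_n}
- \pt\paren*{\tfrac12 (\dx^+ z_n)^\top K z_{n+1/2}},
\end{align*}
verified by expanding both sides and repeatedly using $K^\top=-K$ together with the commutativity of $\pt$ and $\dx^+$. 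For the right-hand side, the same expansion as in \autoref{prop1:eb} gives $\pt S(z_{n+1/2})$ plus the remainder $\tfrac{\alpha}{2}(\pt u_{n+1/2})(\dlt \bmu_{+1/2})_n - \tfrac{\alpha}{2} u_{n+1/2}(\dlt \pt\bmu_{+1/2})_n$, where $\bmu_{+1/2}$ denotes the averaged grid function with entries $u_{n+1/2}$. Since $E_{n+1/2}=S(z_{n+1/2})+\tfrac12(\dx^+ z_n)^\top K z_{n+1/2}$, combining the two sides produces a local balance $\pt E_{n+1/2} = \dx^+(\cdots) - (\text{remainder})$. Summing over $n=0,\dots,N-1$, the discrete divergence telescopes to zero by periodicity and the remainder sums to zero because $\dlt$ is symmetric, which is exactly \eqref{bPbgcle}.

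For \eqref{bPbgcli} I would take the inner product of the temporal semi-discretization \eqref{bPbs2} with $\px z^{i+1/2}$; now $(\px z^{i+1/2})^\top K \px z^{i+1/2}=0$ by skew-symmetry of $K$, and the analogous identity
\begin{align*}
(\px z^{i+1/2})^\top M \dt^+ z^i
= \dt^+\paren*{\tfrac12 (\px z^i)^\top M z^i}
- \px\paren*{\tfrac12 (\dt^+ z^i)^\top M z^{i+1/2}}
\end{align*}
isolates $\dt^+$ of the density $I^i = -\tfrac12 (\px z^i)^\top M z^i$. The functional-derivative term yields $\px S(z^{i+1/2})$ plus the remainder $\tfrac{\alpha}{2}(\px u^{i+1/2})Lu^{i+1/2} - \tfrac{\alpha}{2} u^{i+1/2} L\px u^{i+1/2}$. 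Integrating over $\bbT$ annihilates the $\px$-divergences by periodicity, while the remainder vanishes by the symmetry $\int_\bbT uLv\,\rmd x=\int_\bbT (Lu)v\,\rmd x$ of the continuous operator $L$, giving \eqref{bPbgcli}.

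I expect the main obstacle to be establishing the two discrete product-rule identities for the midpoint-averaged scheme. In contrast to the continuous setting, the averaging $z_{n+1/2}=(z_n+z_{n+1})/2$ destroys the naive telescoping, and individual cross terms such as $(\pt z_n)^\top K z_n$ do not vanish on their own; the required cancellations surface only after a full expansion and a systematic appeal to skew-symmetry. Confirming that what survives reassembles precisely into the stated densities $E_{n+1/2}$ and $I^i$ (rather than a shifted or averaged variant) is where care is needed, but this is a finite bookkeeping calculation with no conceptual difficulty.
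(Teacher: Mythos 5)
Your proposal is correct and is precisely the argument the paper intends: the paper omits the proof, stating only that it proceeds ``in a similar manner to \autoref{prop1:eb}'', and your write-up carries out exactly that adaptation (inner product with $\pt z_{n+1/2}$ resp.\ $\px z^{i+1/2}$, the midpoint discrete product rule, telescoping by periodicity, and cancellation of the nonlocal remainder via the symmetry of $\dlt$ resp.\ $L$). The two product-rule identities you flag as the main obstacle do check out by direct expansion using $K^\top=-K$ and $M^\top=-M$, so no gap remains.
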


These properties are proved in a similar manner to \autoref{prop1:eb}.

\begin{remark}
For the fully-discrete scheme \eqref{bPb}, the mass conservation 
\begin{align}
\dt^+ \sum_{n=0}^{N-1} u_n^i = 0
\end{align}
is satisfied.
\end{remark}

\section{Numerical experiments}
\label{sec4}

In this section, we check the proposed integrators via several
numerical experiments.

\subsection{Solitary wave solution for the Benjamin--Ono equation}

First, we try the Benjamin--Ono equation.
We compare the following four integrators.
\begin{itemize}
	\item The proposed Euler box scheme~\eqref{bEb2},
	\item The proposed Preissmann box scheme~\eqref{bPb},
	\item The $\mathcal{I}$-preserving scheme by Thom\'ee--Vasudeva Murthy~\cite{tm98},
	\item The Heun method with central difference.
\end{itemize}
For the readers' convenience, the Thom\'ee--Vasudeva Murthy scheme reads
\[
\delta_t^+u^i_n + f(u^{i+1/2}_n)
-{{H}}_{\Delta x}\delta_x^+\delta_x^-u^{i+1/2}_n =0,
\]
where
\begin{align*}
f(u^i_n) =
\frac{1}{6\Delta x}(u^i_{n+1}+u^i_n+u^i_{n-1})(u_{n+1}^i-u_{n-1}^i).
\end{align*}
Note that this, as well as the Preissmann box scheme,
form systems of nonlinear equations at each time step,
and we need some nonlinear solver.
We conducted all the numerical experiments on MATLAB 2010b,
and employed \texttt{fsolve} as our nonlinear solver.
The employed Heun scheme reads
\begin{align*}
\delta_t^+ u_n^i = \frac{1}{2}\left\{g(u_n^i)+g\left(u_n^i+g(u_n^i)\Delta t\right)\right\},
\quad\text{where}
\quad g(u_n^i)= -\delta_x \left\{(u_n^i)^2/2-L_{\Delta x}u_n^i \right\}.
\end{align*}
The first three integrators are in some sense structure-preserving,
and the last one is not, while keeping the same second order accuracy.
The last one is employed to see whether actually structure-preserving
integrators are advantageous.

We take the parameters to
$\alpha = 1, \beta = 0, \gamma = 0, \lambda = 1, l = 30$, and
$\Delta t = 2.5\times 10^{-3}$.
We take $\Delta x = l/256$ for the Thom\'ee--Vasudeva Murthy scheme,
which allows only even number of grid points,
and $\Delta x = l/255$ for the rest.
The initial data is chosen to $u(x_n, 0)={2cA^2}/{1-\sqrt{1-A^2}\cos(cA(x_n-l/2))}$,
where $c = 0.25$ and  $A = {2\pi}/{cl}$.
The parameter $c$ denotes the speed of the wave.
The initial data corresponds to the solitary wave solution
$u(x,t)={2cA^2}/{1-\sqrt{1-A^2}\cos(cA(x-ct-l/2))}$.

We show the results in Fig.~\ref{fig:BOsoliton}--\ref{fig:BOsolitonMomentum}.
Fig.~\ref{fig:BOsoliton} shows the wave profiles.
The three structure-preserving integrators well capture the wave
propagation, whereas the Heun scheme exhibits instability
with the same discretization widths.
This confirms the superiority of the structure-preserving integrators.
Fig.~\ref{fig:BOsolitonEnergy} and Fig.~\ref{fig:BOsolitonMomentum}
shows the evolution of the invariants $\mathcal{I}$ and $\mathcal{E}$.
From these figures we observe the following two facts.
First, the two invariants blow up in the Heun scheme,
which reflects the instability observed above.
Second, the other schemes more or less nearly preserve both invariants,
at least up to $10^{-8}$.
For the Euler box and Preissmann box schemes, this is somewhat we expected
due to the near conservation properties mentioned above.
These properties are verified by the backward error analysis (cf.~\cite{mo03,mr03}).
The key of the analysis is that the modified equation is also of the form \eqref{ems}.
The Thom\'ee--Vasudeva Murthy scheme better preserves $\mathcal{I}$,
which is a natural result since it strictly conserves the invariant
by construction.
The error observed here should be attributed to the nonlinear solver
employed in the time-stepping.

The results above confirms that the proposed integrators are
in fact good discretizations.

\begin{figure}[htbp]
\centering
\includegraphics{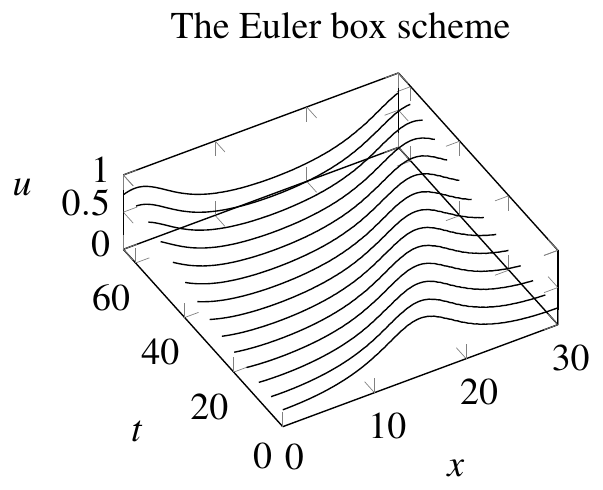}
\includegraphics{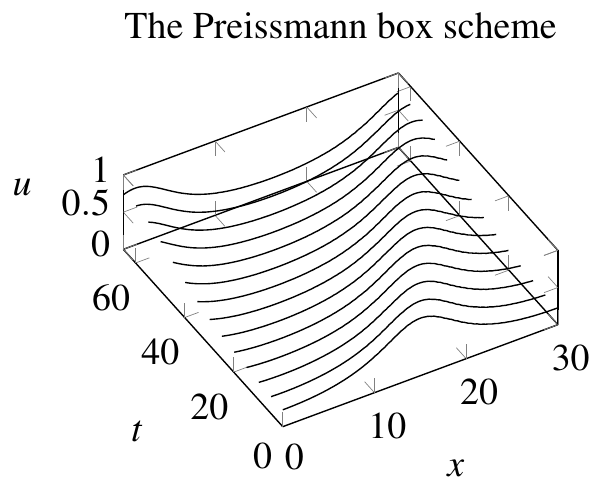};
\includegraphics{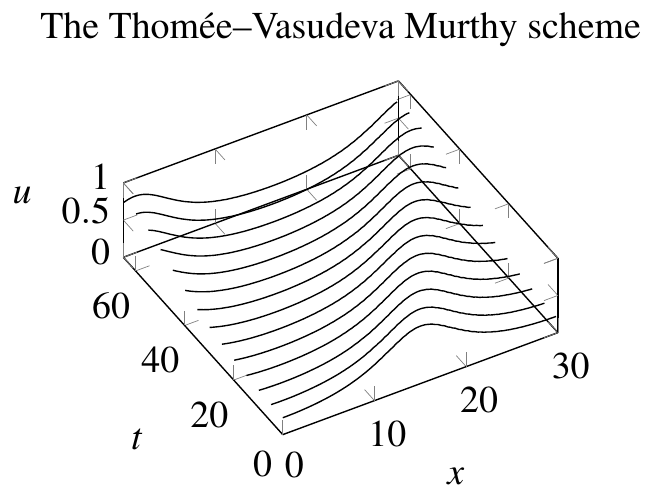}
\includegraphics{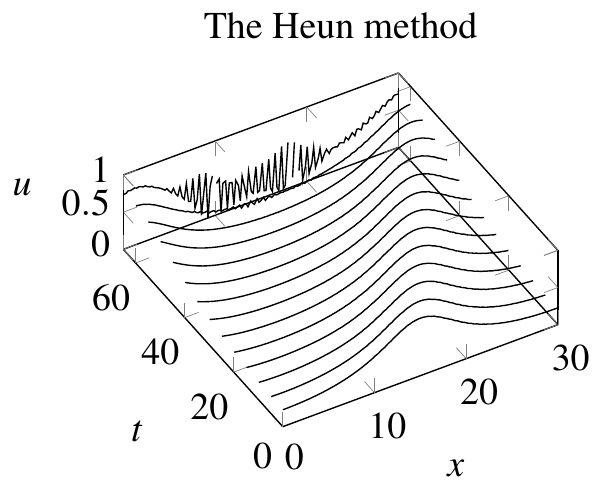}
\caption{Solitary wave solution for the Benjamin--Ono equation.}
\label{fig:BOsoliton}
\end{figure}

\begin{figure}[htbp]
\centering
\includegraphics{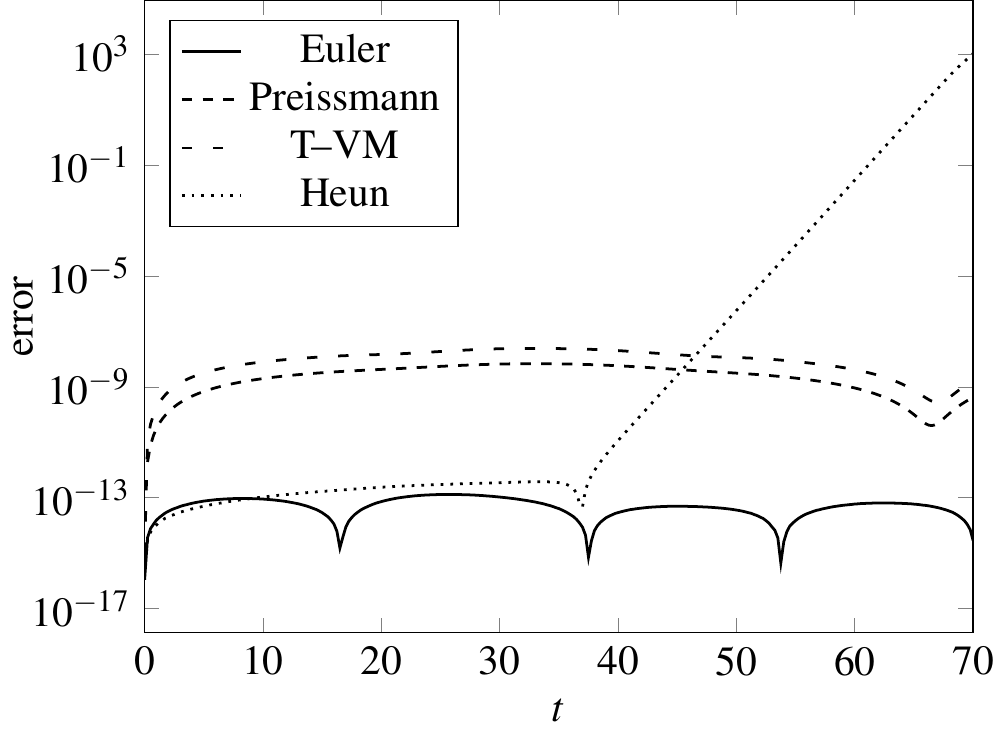}
\caption{Evolution of $\calE$ in the Benjamin--Ono equation.}
\label{fig:BOsolitonEnergy}
\end{figure}

\begin{figure}[htbp]
\centering
\includegraphics{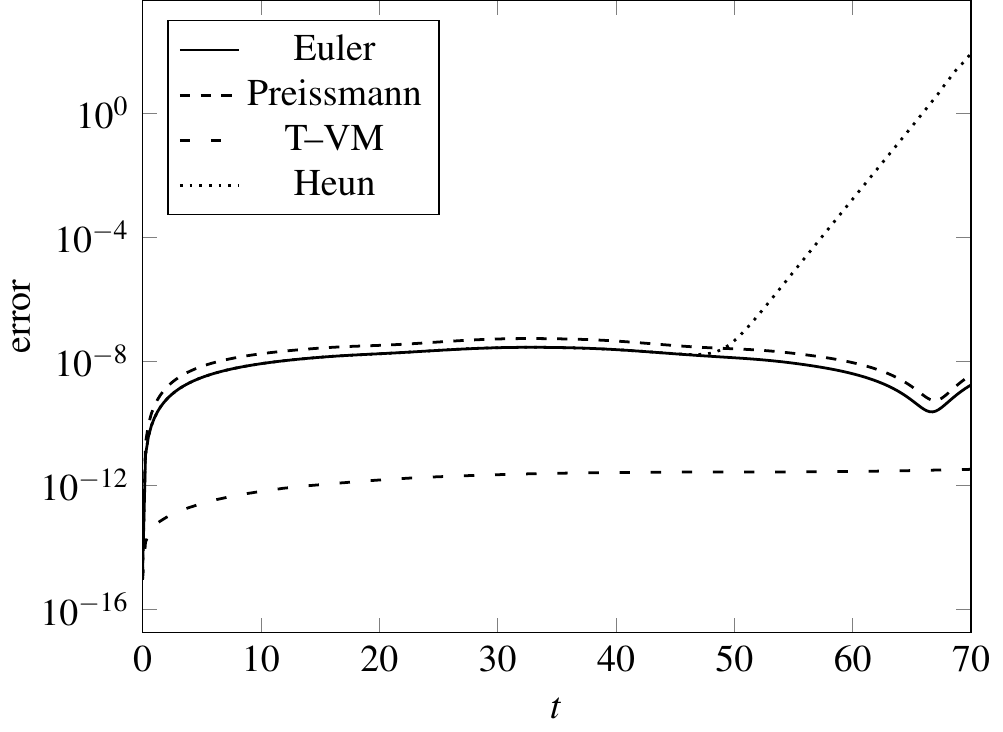}
\caption{Evolution of $\calI$ in the Benjamin--Ono equation.}
\label{fig:BOsolitonMomentum}
\end{figure}

\subsection{Train of solitary waves in the Benjamin equation}

Next, we try the Benjamin equation.
Unfortunately, for this equation, it seems exact solitary wave
solutions have not been discovered in closed form, whereas many efforts
have been done to capture it numerically
(see, for example, \cite{abr99,ddm12,ddm15}. See also~\cite{ap99}.)
Thus here we borrow a numerical setting from~\cite[Section 2.3]{ddm15},
where the break down of a Gaussian packet to solitary wave
solutions is observed.

We take the parameters to $\alpha = -1,\beta = -1, \gamma = 1, \lambda = 1, l = 600$,
$\Delta x = l/2048$, and $\Delta t = 10^{-2}$.
The initial data is chosen to $u(x_n, 0)=2\exp\left(-{(x_n-l/2)^2}/{16}\right)$.
In this experiment, mainly due to the heavy time complexity of the Preissmann box scheme,
we test only the Euler box scheme.

The results are shown in Fig.~\ref{fig:GaussianEB} and Fig.~\ref{fig:GaussianEBinvariants}.
Fig.~\ref{fig:GaussianEB} shows the wave profile at $t=100$.
We observe a train of solitary waves, which is exactly what the preceding study~\cite{ddm15}
observed. 
We here like to note that in~\cite{ddm15} a sophisticated nonlinear scheme was
employed, while in the present paper the Euler box scheme is explicit.
Fig.~\ref{fig:GaussianEBinvariants} shows the evolution of the invariants.
The invariant $\mathcal{E}$ is very well preserved.
The invariant $\mathcal{I}$ deviates from the original value in the early phase
of wave splitting, and then is well preserved after that.

Through this experiment, we conclude that the proposed Euler box scheme
is in fact a good integrator for the Benjamin equation.

\begin{figure}[htbp]
\centering
\includegraphics{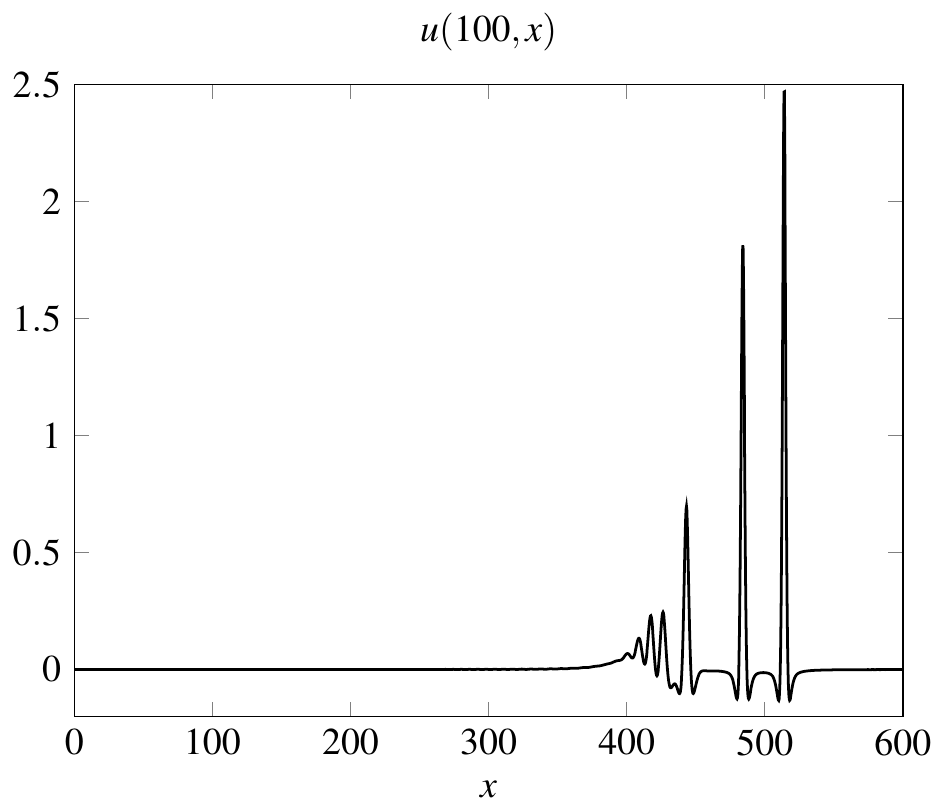}
\caption{Train of solitary waves in the Benjamin equation.}
\label{fig:GaussianEB}
\end{figure}

\begin{figure}[htbp]
\centering
\includegraphics{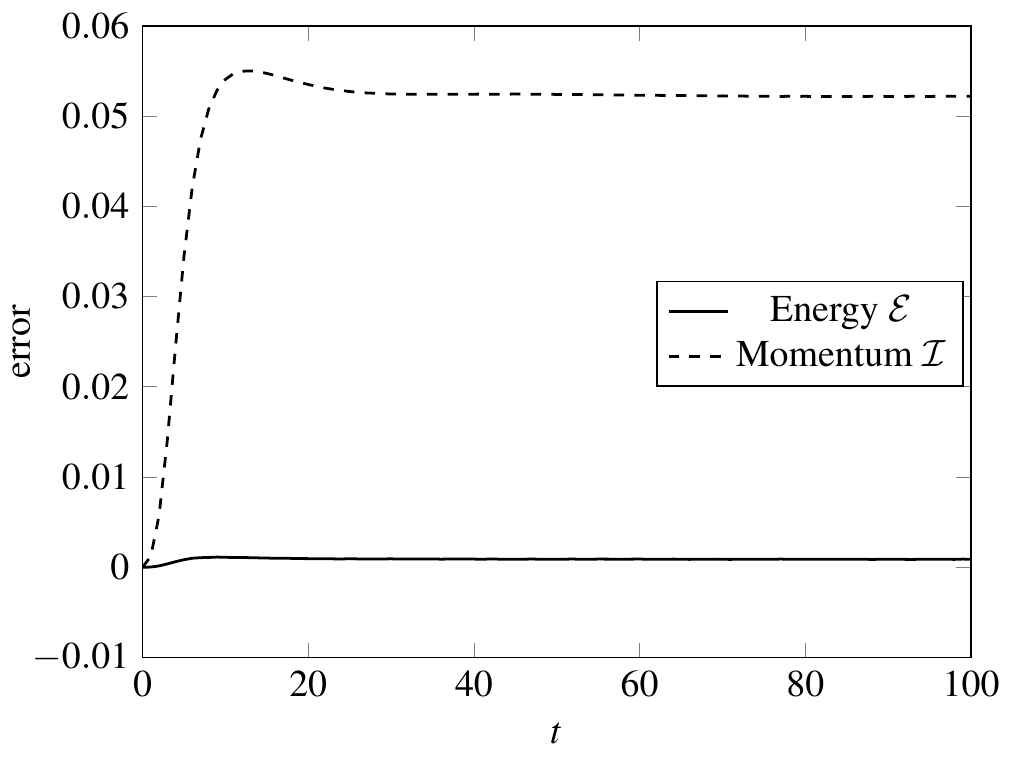}
\caption{Evolution of $\calE$ and $\calI$ in the Euler box scheme for the Benjamin equation.}
\label{fig:GaussianEBinvariants}
\end{figure}

\subsection{Possible wave breaking in the Benjamin equation}

From the experiment above, we regard the proposed Euler box scheme 
is a reliable integrator, at least to a certain extent.
Next, we try to give a new insight about the behavior
of the solutions of the Benjamin equation, utilizing the integrator.
Although in~\cite{li99} the global well-posedness of the equation
has been proved in $L^2(\bbT)$,
that does not necessarily prohibits wave breaking
in other spaces with higher regularity.
The example below might suggest such a possibility.

We take the parameters to $\alpha = 0.01, \beta = 0.001, \gamma = 0.1, \lambda = 0.2, l = 10$,
$\Delta x = l/4096$, and $\Delta t = 10^{-6}$.
The initial data is chosen to
$u(x_n,0) = \cos\left({2\pi x_n}/{l}\right)$.

The results are shown in Fig.~\ref{fig:BlowUpEbu}--\ref{fig:BlowUpInvariants}.
Fig.~\ref{fig:BlowUpEbu} and Fig.~\ref{fig:BlowUpEbux} show the evolution
of $u$ and its derivative $u_x$, respectively.
In Fig.~\ref{fig:BlowUpEbu}, it seems the solution develops a steep slope
around $x \simeq 3$.
Actually in such a place the derivative $u_x$ seems to blow up (Fig.~\ref{fig:BlowUpEbux}).
Fig.~\ref{fig:BlowUpInvariants} shows the evolution of the invariants
$\mathcal{I}$ and $\mathcal{E}$; we see that both are well preserved,
which supports that the result is correct.
Just to confirm this view, we also tried the fourth order Runge--Kutta method
and the space discretization with central differences.
We observed a similar steep slope there (the result omitted here).
We also like to point out that in the Runge--Kutta scheme
the invariants are not
preserved up to the same level as the proposed Euler box scheme
(Fig.~\ref{fig:BlowUpInvariants}).
This means that even with the fourth order temporal accuracy 
this phenomenon is hard to capture, and the structure-preserving
Euler box scheme is much advantageous.

\begin{figure}[htbp]
\centering
\includegraphics{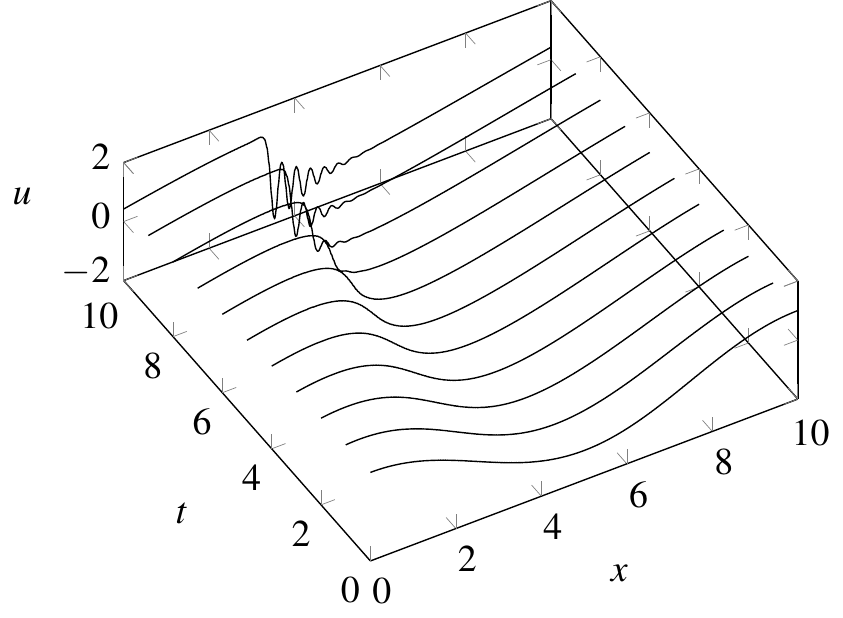}
\caption{The solution captured by the Euler box scheme: Evolution of $u$.}
\label{fig:BlowUpEbu}
\end{figure}

\begin{figure}[htbp]
\centering
\includegraphics{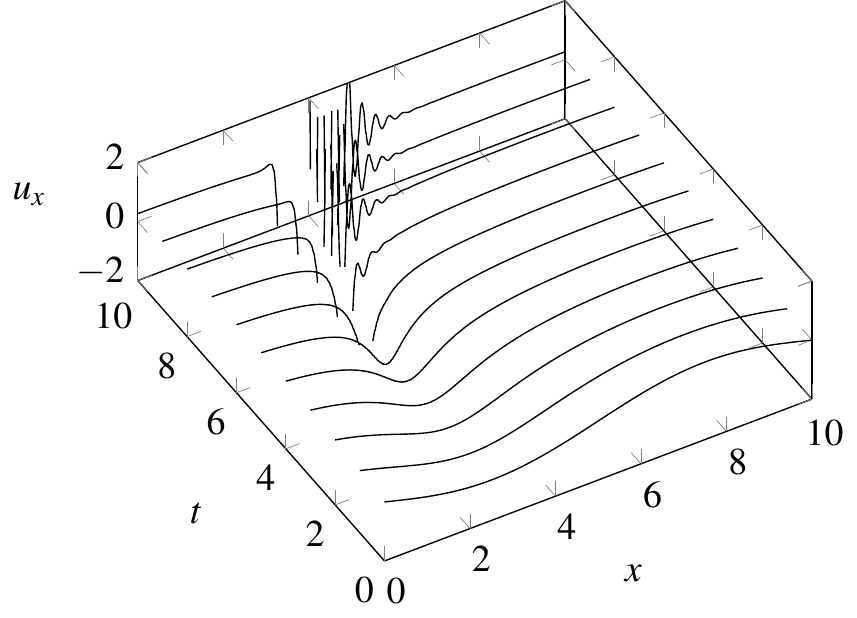}
\caption{The solution captured by the Euler box scheme: Evolution of $u_x$.}
\label{fig:BlowUpEbux}
\end{figure}

\begin{figure}[htbp]
\centering
\includegraphics{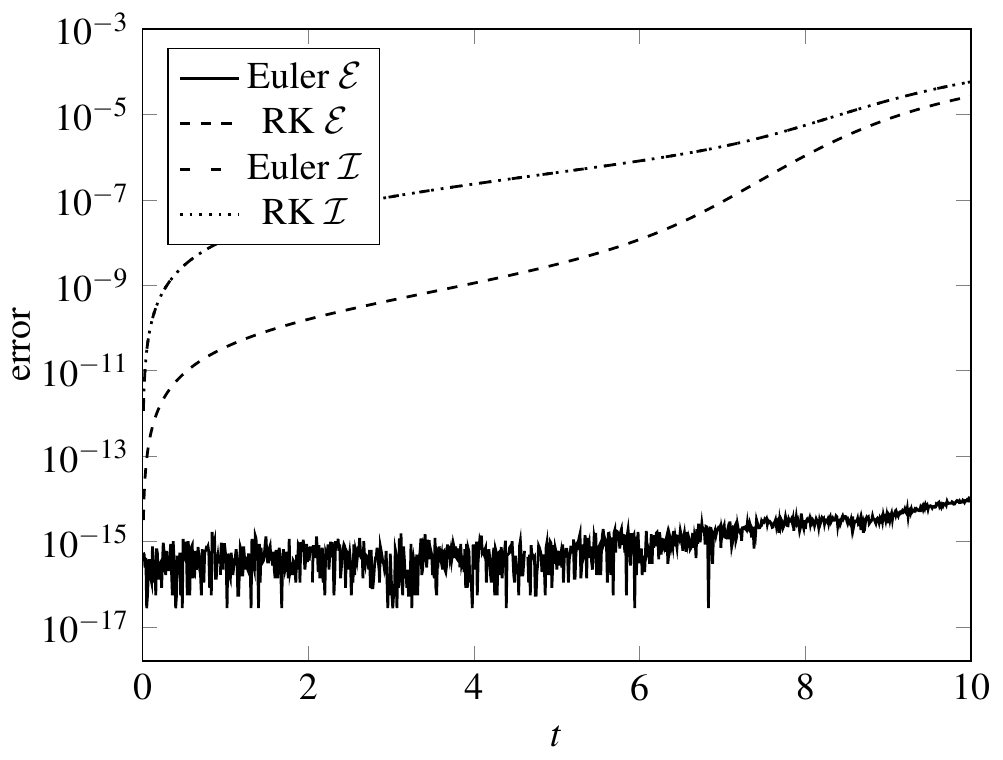}
\caption{The evolution of the invariants.
The results for the momentum $\calI$ are almost identical.}
\label{fig:BlowUpInvariants}
\end{figure}

\section{Concluding remarks}
\label{sec5}

In this paper, we gave a new reformulation of the Benjamin-type equations,
and proposed the Euler box and Preissmann box schemes based on
the formulation.
For the latter scheme, we need the discretization of the Hilbert
transform on the grids with odd number points, and we provided that
with theoretical analysis.
The numerical experiments confirmed the effectiveness of the proposed
structure-preserving schemes.
We hope that these schemes are useful for better understanding
on the behavior of the solutions of the Benjamin-type equations.

One important issue is left unsolved in the present study---%
although we basically followed the line of the discussion
in the standard multi-symplectic method,
the new formulation is not multi-symplectic in the strict sense
of the word.
In fact, as we saw in Section~\ref{sec3},
the standard local conservation laws in usual multi-symplectic PDEs
are lost, and only their weaker versions, regarding global invariants
integrated over space, are allowed to hold.
We have to have a deeper understanding about the geometric 
meaning of the new formulation.

\subsection*{Acknowledgements}
We thank A. Duran who drew our interest to the Benjamin equation.
This work is partly supported by JSPS KAKENHI Grant Numbers
26390126 and 25287030, and also by CREST, JST.

\end{document}